\DeclareMathOperator*{\argmin}{arg\, min}
\DeclareMathOperator{\spn}{span}
\newcommand{\mbbN}{\mathbb{N}}
\newcommand{\mbbR}{\mathbb{R}}
\newcommand{\mbbC}{\mathbb{C}}
\newcommand{\mc}[1]{\mathcal{#1}}
\newcommand{\wt}[1]{\widetilde{#1}}
\newcommand{\ol}[1]{\overline{#1}}
\newcommand{\Break}{\State\textbf{break}}
\newcommand{\hypref}[2]{\hyperref[#2]{#1 \ref*{#2}}}
\newcommand{\hyprefp}[2]{(\hyperref[#2]{#1\ref*{#2}})}
\newcommand{\snot}[1]{\mathrm{e}{#1}}
\crefname{hypothesis}{Hypothesis}{Hypotheses}
\title{Projected Newton method for a system of Tikhonov-Morozov equations\thanks{Submitted to the editors \today.}}
\author{Nick Schenkels\thanks{Department of Mathematics and Computer Science,
	University of Antwerp (\email{nick.schenkels@uantwerpen.be}).}
\and Wim Vanroose \thanks{Department of Mathematics and Computer Science,
	University of Antwerp (\email{wim.vanroose@uantwerpen.be}).}}
\begin{document}

%%%%%%%%%%%%%%%%%%%%%%%%%%%%%%%%%%%%%%%%%%%%%%%%%%%%%%%%%%%%%%%%%%%%%%%%%%%%%%%%

\maketitle

%%%%%%%%%%%%%%%%%%%%%%%%%%%%%%%%%%%%%%%%%%%%%%%%%%%%%%%%%%%%%%%%%%%%%%%%%%%%%%%%

% REQUIRED
\begin{abstract}
		In this paper we derive a Newton type method to solve the non-linear system
		formed by combining the Tikhonov normal equations and Morozov's discrepancy
		principle. We prove that by placing a bound on the step size of the Newton
		iterations the method will always converge to the solution. By projecting the
		problem onto a low dimensional Krylov subspace and using the method to solve the projected
		non-linear system we show that we can reduce the computational cost of the method.
\end{abstract}

% REQUIRED
\begin{keywords}
		Newton's method, Tikhonov regularization, Morozov's	discrepancy principle, Krylov
		subspace method.
\end{keywords}

% REQUIRED
\begin{AMS}
  68Q25, 68R10, 68U05
\end{AMS}

%%%%%%%%%%%%%%%%%%%%%%%%%%%%%%%%%%%%%%%%%%%%%%%%%%%%%%%%%%%%%%%%%%%%%%%%%%%%%%%%

\section{Introduction}
In this paper we consider linear inverse problems of the form $Ax = b$ with $A\in
\mbbR^{m\times n}$, $x\in\mbbR^n$ and $b\in\mbbR^m$. Here, the right hand side $b$
is the perturbed version of the unknown exact measurements or observations $b_{ex}
= b + e$, with $e\sim\mc{N}(0, \sigma^2I_m)$. It is well known that for ill-posed
problems some form of regularization has to be used in order to deal with the noise
$e$ in the data $b$ and to find a good approximation for the true solution of $Ax = b_{ex}$.
One of the most widely used methods to do so is Tikhonov regularization. In its
standard from, the Tikhonov solution to the inverse problem is given by
\begin{equation}\label{eq:tikhonov}
		x_\alpha = \argmin_{x\in\mbbR^n}\left\|Ax - b\right\|^2 + \alpha\left\|x\right\|^2,
\end{equation}
where $\alpha > 0$ is a regularization parameter and $\left\|\cdot\right\|$ denotes
the standard Euclidean norm. 

The choice of the regularization parameter is very important since its value has
a significant impact on the reconstruction. If, on the one hand, $\alpha$ is chosen
too large, focus lies on minimizing the regularization term $\left\|x\right\|^2$.
The corresponding reconstruction $x_\alpha$ will therefore no longer be a good
solution for the linear system $Ax = b$, will typically have lost many details
and be what is referred to as ``oversmoothed''. If, on the other had, $\alpha$
is chosen too small, focus lies on minimizing the residual $\left\|Ax - b\right\|^2$.
This, however, means that the errors $e$ are not suppressed and that the reconstruction
$x_\alpha$ will be ``overfitted'' to the measurements.

\begin{figure}
		\centering
		\begin{tikzpicture}[line cap = round, line join = round, > = triangle 45, scale = 0.65]
		% L-curve:
		\begin{scope}[shift = {(-7, 0)}]
				% Axis:
				\draw[->] (-1, 0) -- (5, 0);
				\draw[->] (0, -1) -- (0, 5);
				
				 L-curve:
				\draw [shift = {(0, 3)}, thick]  plot[domain = 0:1.07,variable = \t] ({cos(\t r)},{sin(\t r)});
				\draw [shift = {(3, 0)}, thick]  plot[domain = 0.5:1.57,variable = \t] ({cos(\t r)},{sin(\t r)});
				\draw [shift = {(1.5, 1.5)}, thick]  plot[domain = 3.14:4.71,variable = \t] ({cos(\t r)/2},{sin(\t r)/2});
				\draw [thick] (1, 3) -- (1, 1.5);
				\draw [thick] (1.5, 1) -- (3, 1);
				
				% Discrepancy:
				\draw [dashed, thick, color = red] (1.75, -0.25) -- (1.75, 3.25);
				\node [anchor = east] at (1.6, 0.35) {\color{red} $\eta\varepsilon$};
				
				% Arrows:
				\draw [shift = {(0, 3)}] plot[domain = 0.32:1.25, variable = \t] ({1*1.58*cos(\t r)+0*1.58*sin(\t r)},{0*1.58*cos(\t r)+1*1.58*sin(\t r)});
				\draw [shift = {(3, 0)}] plot[domain = 0.32:1.25, variable  =\t] ({1*1.58*cos(\t r)+0*1.58*sin(\t r)},{0*1.58*cos(\t r)+1*1.58*sin(\t r)});
				\draw [->] (0.5, 4.5) -- (0.29, 4.57);
				\draw [->] (4.5, 0.5) -- (4.57, 0.3);
				
				% Nodes:
				\node [label = below:$\left\|Ax_\alpha - b\right\|$] at (2.5, -0.25) {};
				\node [label = left:\rotatebox{90}{$\left\|x_\alpha\right\|$}] at (-0.25, 2.5) {};
				\node [font = \tiny, label = right:$0\leftarrow\alpha$] at (0.75, 4.5) {};
				\node [font = \tiny, label = right:$\alpha\rightarrow+\infty$] at (4, 1.25) {};
				
				% Best alpha:
				
				% Best alpha:
				\draw[->] (2.75, 2.25) -- (1.8, 1.1);
				\node [align = center] at (4, 2.75) {\footnotesize $\alpha$ for discrepancy\\ \footnotesize principle};
				\draw[->] (-1.5, -0.2) -- (1.1, 1.1);
				\node [align = center] at (-1.5, -1) {\footnotesize $\alpha$ for L-curve\\ \footnotesize method};
		\end{scope}
		
		% D-curve:
		\begin{scope}[shift = {(1, 0)}]
				% axis:
				\draw[->] (-1, 0) -- (7, 0);
				\draw[->] (0, -1) -- (0, 5);
				
				% D-curve:
				\draw [thick] (0, 0.25) .. controls (2, 0.25) and (4, 2) .. (6, 4);
				
				% Discrepancy:
				\draw [dashed, thick, color = red] (-0.1, 2) -- (7, 2);
				\node [anchor = north] at (7, 1.9) {\color{red} $\eta\varepsilon$};
				
				% Nodes:
				\node [anchor = north] at (3.5, 0) {$\alpha$};
				\node [anchor = east] at (-0.1, 2.5) {\rotatebox{90}{$\left\|Ax_\alpha - b\right\|$}};
				
				% Best alpha:
				\draw[->] (3.3, 3.1) -- (3.8, 2.1);
				\node [align = center] at (2.75, 3.75) {\footnotesize $\alpha$ for discrepancy\\  \footnotesize principle};
				
				% Overfitting & -smoothing areas:
				\draw [thick, decorate, decoration={brace, mirror}] (0, -0.6) -- (3.7, -0.6)
						node [midway, yshift = -10] {\footnotesize Overfitting};
				\draw [thick, decorate, decoration={brace, mirror}] (3.8, -0.6) -- (7, -0.6)
						node [midway, yshift = -10] {\footnotesize Oversmoothing};
		\end{scope}
\end{tikzpicture}
		\caption{Sketch of the L-curve (left) and the D-curve (right). The value
		for $\alpha$ proposed by the L-curve method is typically slightly larger
		than the one proposed by the discrepancy principle \cite{hansen1992}.}
		\label{fig:curves}
\end{figure}
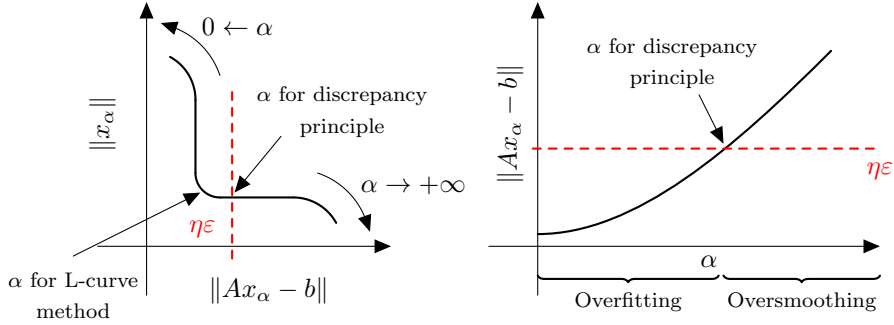

One way of choosing the regularization parameter is the L-curve method. If
$x_\alpha$ is the solution of the Tikhonov problem \eqref{eq:tikhonov}, then
the curve $(\left\|Ax_\alpha - b\right\|, \left\|x_\alpha\right\|)$ typically
has a rough ``L'' shape, see \hypref{figure}{fig:curves}. Heuristically, the
value for the regularization parameter corresponding to the corner of this ``L''
has been proposed as a good regularization parameter because is balances model
fidelity (minimizing the residual) and regularizing the solution (minimizing the
regularization term) \cite{calvetti1999, hansen1992, hansen1993, hansen2010}.
The problem with this method is that in order to find this value, the Tikhonov
problem has to be solved for many different values of $\alpha$, which can be
computationally expensive and inefficient for large scale problems.

Another way of choosing the regularization parameter is Morozov's discrepancy
principle \cite{morozov1984}. Here, the regularization parameter is chosen such that
\begin{equation}\label{eq:morozov}
		\left\|Ax_\alpha - b\right\| = \eta\varepsilon
\end{equation}
with $\varepsilon = \left\|e\right\|$ the size of the error and $1\leq\eta$
a tolerance value. The idea behind this choice is that finding a solution $x_\alpha$
with a lower residual can only lead to overfitting. Similarly to the L-curve,
we can look at the curve $(\alpha, \left\|Ax_\alpha - b\right\|)$, which we'll
refer to as the discrepancy curve or D-curve, see \hypref{figure}{fig:curves}.
If $e\sim\mc{N}(0, \sigma^2I_m)$, then it is an easy verification to see that
$\varepsilon\approx\sigma\sqrt{m}$, but in general the size of the error may
be unknown.

In this paper we describe a Newton type method that simultaneously updates the
solution $x$ and the regularization parameter $\alpha$ such that the Tikhonov
problem \eqref{eq:tikhonov} and Morozov's discrepancy principle \eqref{eq:morozov}
are both satisfied. This is done by combining both equations into one big non-linear
system in $x$ and $\alpha$ and solving it using Newton's method. However, starting
from an arbitrary initial estimate, convergence of the classical Newton's method
cannot be guaranteed. In \hypref{section}{sec:ntm} we prove that by starting from
a specific initial estimate and placing a bound on the step size of the Newton
updates the method will always converge. We also derive an estimate for this step
size. For large scale problems computing the Newton search directions and this
step size can, however, be computationally expensive. In \hypref{section}{sec:pntm}
we therefore combine our method with a projection onto a low dimensional Krylov
subspace. In \hypref{sections}{sec:numexp1} and \ref{sec:numexp2} we perform
extensive numerical experiments in order to illustrate the workings of these
methods and compare them with other regularization methods found in the literature,
see \hypref{section}{sec:refmethods}. Finally, in \hypref{section}{sec:concl},
we end the paper with a short discussion on some open questions that remain.

%%%%%%%%%%%%%%%%%%%%%%%%%%%%%%%%%%%%%%%%%%%%%%%%%%%%%%%%%%%%%%%%%%%%%%%%%%%%%%%%

\section{Tikhonov-Morozov system}\label{sec:ntm}
In order to find $(x, \alpha)\in\mbbR^n\times\mbbR_0^+$ that solves the Tikhonov
problem and satisfies the discrepancy principle, we consider the non-linear system
\begin{equation}\label{eq:tikmor}
		\left\{\begin{aligned}
				F_1(x, \alpha) &= (A^T A + \alpha I)x - A^Tb\\
				F_2(x, \alpha) &= \frac{1}{2}(Ax - b)^T(Ax - b) - \frac{1}{2}\varepsilon^2
		\end{aligned}\right.
\end{equation}
for $F:\mbbR^n\times\mbbR_0^+\longmapsto\mbbR^{n}\times\mbbR_0^+$. Here, $F_1(x, \alpha)
= 0$ are the normal equations corresponding to the Tikhonov problem \eqref{eq:tikhonov}
with regularization parameter $\alpha$ and $F_2(x, \alpha) = 0$ is equivalent to Morozov's
discrepancy principle \eqref{eq:morozov} (for simplicity we assume that $\eta = 1)$.

If we apply Newton's method to solve this non-linear system of equations, convergence
of the method starting from an arbitrary initial estimate cannot be guaranteed.
We will prove that by starting from a point $(x_0, \alpha_0)$ satisfying the Tikhonov
normal equations $F_1$, we can guarantee convergence of Newton's method by limiting
the step size. The idea behind this approach is the observation that for points
which ``almost'' satisfy these equations, the Jacobian will be invertible. By
placing a bound on the Newton step size, we can force the iterations to remain
within this region of interest and prove convergence.

%%%%%%%%%%%%%%%%%%%%%%%%%%%%%%%%%%%%%%%%%%%%%%%%%%%%%%%%%%%%%%%%%%%%%%%%%%%%%%%%

\subsection{Newton iterations}
If the current Newton iteration for the solution of \eqref{eq:tikmor} is given
by $\left(x_{k - 1}, \alpha_{k - 1}\right)$, then we write the next iteration as
\[
		x_k = x_{k - 1} + \Delta x_k\qquad\text{and}\qquad \alpha_k = \alpha_{k - 1}
				+ \Delta \alpha_k.
\]
The Jacobian system for the Newton search directions is now given by
\[
		\begin{pmatrix}A^TA+ \alpha_{k - 1} I  & x_{k - 1}\\ (Ax_{k - 1} - b)^T A
				& 0\end{pmatrix}\begin{pmatrix}\Delta x_k\\	\Delta\alpha_k\end{pmatrix}
				= -\begin{pmatrix}(A^TA + \alpha_{k - 1} I)x_{k - 1} - A^T b\\ \frac{1}
				{2}(Ax_{k - 1} - b)^T(Ax_{k - 1} - b)-\frac{1}{2}\epsilon^2
		\end{pmatrix},
\]
or in short
\begin{equation}\label{eq:newtoneq}
  J(x_{k - 1}, \alpha_{k - 1})\begin{pmatrix}\Delta x_k\\ \Delta\alpha_k\end{pmatrix}
			= -F(x_{k - 1}, \alpha_{k - 1}).
\end{equation}
\begin{lemma}
		For all Newton iterations with $k\in\mbbN_0$, the following relationship holds:
		\[
				F(x_k, \alpha_k) = \begin{pmatrix}\Delta\alpha_k\Delta x_k\\
						\frac{1}{2}\Delta x_k^TA^TA\Delta x_k\end{pmatrix}.
		\]
\end{lemma}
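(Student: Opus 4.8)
The plan is to prove the identity by a direct substitution of $x_k = x_{k-1} + \Delta x_k$ and $\alpha_k = \alpha_{k-1} + \Delta\alpha_k$ into each component of $F$, using the two block rows of the Newton system \eqref{eq:newtoneq} to annihilate the zeroth- and first-order contributions. The organizing observation is that each component of $F$ is a polynomial of degree at most two in $(x,\alpha)$, so Taylor's expansion around $(x_{k-1},\alpha_{k-1})$ is exact:
\[
F(x_k,\alpha_k) = F(x_{k-1},\alpha_{k-1}) + J(x_{k-1},\alpha_{k-1})\begin{pmatrix}\Delta x_k\\ \Delta\alpha_k\end{pmatrix} + Q,
\]
where $Q$ gathers the purely quadratic terms in the increments $(\Delta x_k,\Delta\alpha_k)$. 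Equation \eqref{eq:newtoneq} says precisely that the first two summands cancel, hence $F(x_k,\alpha_k) = Q$, and it only remains to identify $Q$ componentwise. (This also implicitly assumes, as for any statement about the iterates, that the Newton step is well defined, i.e.\ that $J(x_{k-1},\alpha_{k-1})$ is invertible at the relevant points; I would take this from the surrounding analysis rather than reprove it here.)

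For the first component, expand $F_1(x_k,\alpha_k) = (A^TA + \alpha_{k-1}I + \Delta\alpha_k I)(x_{k-1}+\Delta x_k) - A^Tb$. Collecting terms by their order in the increments, the order-zero part is $(A^TA+\alpha_{k-1}I)x_{k-1} - A^Tb = F_1(x_{k-1},\alpha_{k-1})$, the order-one part is $(A^TA+\alpha_{k-1}I)\Delta x_k + x_{k-1}\Delta\alpha_k$, which is exactly the first block of $J(x_{k-1},\alpha_{k-1})(\Delta x_k,\Delta\alpha_k)^T$, and the only remaining term is the cross term $\Delta\alpha_k\Delta x_k$. By the first block row of \eqref{eq:newtoneq} the first two cancel against $-F_1(x_{k-1},\alpha_{k-1})$, leaving $F_1(x_k,\alpha_k) = \Delta\alpha_k\Delta x_k$.

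For the second component, write $r_{k-1} := Ax_{k-1}-b$, so that $Ax_k - b = r_{k-1} + A\Delta x_k$ and
\[
F_2(x_k,\alpha_k) = \frac{1}{2}r_{k-1}^Tr_{k-1} - \frac{1}{2}\varepsilon^2 + r_{k-1}^TA\Delta x_k + \frac{1}{2}\Delta x_k^TA^TA\Delta x_k.
\]
The first group equals $F_2(x_{k-1},\alpha_{k-1})$; the term $r_{k-1}^TA\Delta x_k = (Ax_{k-1}-b)^TA\Delta x_k$ is the second block of $J(x_{k-1},\alpha_{k-1})(\Delta x_k,\Delta\alpha_k)^T$ (the $\Delta\alpha_k$ slot in that row is multiplied by $0$); and the quadratic remainder is $\frac{1}{2}\Delta x_k^TA^TA\Delta x_k$. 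The second block row of \eqref{eq:newtoneq} cancels the first two against $-F_2(x_{k-1},\alpha_{k-1})$, giving $F_2(x_k,\alpha_k) = \frac{1}{2}\Delta x_k^TA^TA\Delta x_k$. Stacking the two components yields the claimed formula. There is no genuine obstacle in this argument beyond careful bookkeeping of which terms are order zero, one, and two in the increments; the only subtlety worth flagging is that if a damped step $\theta_k(\Delta x_k,\Delta\alpha_k)$ were used in place of the full Newton step, the right-hand side would simply pick up the factors $\theta_k^2$, i.e.\ $F(x_k,\alpha_k) = (\theta_k^2\Delta\alpha_k\Delta x_k,\ \frac{1}{2}\theta_k^2\Delta x_k^TA^TA\Delta x_k)^T$.
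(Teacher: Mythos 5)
Your proof is correct and follows essentially the same route as the paper: since $F$ is quadratic in $(x,\alpha)$, the exact Taylor expansion about $(x_{k-1},\alpha_{k-1})$ has its zeroth- and first-order terms cancelled by the Newton system \eqref{eq:newtoneq}, leaving exactly the quadratic remainder $\left(\Delta\alpha_k\Delta x_k,\ \tfrac{1}{2}\Delta x_k^TA^TA\Delta x_k\right)^T$. Your componentwise bookkeeping (and the aside about a damped step acquiring a factor $\gamma_k^2$) is just a more explicit rendering of the paper's one-line calculation.
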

\begin{proof}
		Using the definition of $F$, it is a straightforward calculation to find that
		\begin{align*}
				F(x_k, \alpha_k) =& F(x_{k - 1} + \Delta x_k, \alpha_{k - 1} + \Delta\alpha_k)\\
				=& J(x_{k - 1}, \alpha_{k - 1})\begin{pmatrix}\Delta x_k\\ \Delta\alpha_k
						\end{pmatrix}	+ F(x_{k - 1}, \alpha_{k - 1}) + \begin{pmatrix}\Delta
						\alpha_k\Delta x_k\\ \frac{1}{2}\Delta x_k^TA^TA_k\Delta x\end{pmatrix}.
		\end{align*}
		Because the search directions $\Delta x_k$ and $\Delta\alpha_k$ are
		found by solving \eqref{eq:newtoneq}, the sum of first two terms equals
		zero, proving the lemma.
\end{proof}

This lemma implies that
\begin{equation}\label{eq:temp}
		J(x_k, \alpha_k)\begin{pmatrix}\Delta x_{k + 1}\\ \Delta\alpha_{k + 1}
				\end{pmatrix}	= -\begin{pmatrix}\Delta\alpha_k & 0\\ \frac{1}{2}
				\Delta x_k^TA^TA & 0\end{pmatrix}\begin{pmatrix}\Delta x_k\\
				\Delta\alpha_k\end{pmatrix},
\end{equation}
resulting in a recurrence relation between two sequential Newton search directions.
Another consequence of the lemma is that
\begin{equation}\label{eq:temp2}
		\begin{aligned}
				&&\left(A^TA + \alpha_kI\right)x_k - A^Tb &= \Delta\alpha_k\Delta x_k\\
				\Leftrightarrow&& A^T\left(Ax_k - b\right) &= -\alpha_kx_k + \Delta\alpha_k
						\Delta x_k.
		\end{aligned}
\end{equation}
This means that if we rescale the last row of \eqref{eq:newtoneq} with $\alpha_{k - 1} > 0$
and instead solve
\[
		\begin{aligned}
				&\begin{pmatrix} A^TA + \alpha_{k - 1} I  & x_{k - 1}\\ \frac{1}{\alpha_{k - 1}}
						(Ax_{k - 1} - b)^TA  & 0 \end{pmatrix}\begin{pmatrix}	\Delta x_k\\
						\Delta\alpha_k \end{pmatrix}\\
				&\hspace{7em}= -\begin{pmatrix} (A^TA + \alpha_{k - 1} I)x_{k - 1} - A^Tb\\ \frac{1}
						{2\alpha_{k - 1}}(Ax_{k - 1} - b)^T(Ax_{k - 1} - b) - \frac{1}{2
						\alpha_{k - 1}}\epsilon^2
				\end{pmatrix},
		\end{aligned}
\]
then the same search directions are found and \eqref{eq:temp} and \eqref{eq:temp2}
remain valid.

%%%%%%%%%%%%%%%%%%%%%%%%%%%%%%%%%%%%%%%%%%%%%%%%%%%%%%%%%%%%%%%%%%%%%%%%%%%%%%%%

\subsection{At the discrepancy curve}
Assume we have $\alpha > 0$ and $x$ such that $F_1(x, \alpha) = 0$. This means
that $x$ is the solution of the Tikhonov normal equations
\[
		(A^TA + \alpha I)x = A^Tb\ \Leftrightarrow\ \frac{1}{\alpha}(Ax - b)^TA = -x^T
\]
and $\left(\alpha, \left\|Ax - b\right\|\right)$ is a point on the discrepancy
curve, but not necessarily corresponding to the optimal value of the regularization
parameter. In this case, the rescaled Jacobian matrix for the Newton system has the
following simplified form:
\[
		D(x, \alpha) := \begin{pmatrix}
      A^T A + \alpha I   &x \\
      -x^T  & 0 
    \end{pmatrix}.
\]
We now look at the numerical range \cite{givens1952}, which for a matrix $A\in
\mathbb{C}^{n\times n}$ is defined as
\[
		W(A) = \left\{\left.\frac{x^*Ax}{x^*x}\ \right|\ x\in\mbbC^n, x\neq 0 \right\},
\]
where $x^*$ denotes the complex conjugate of $x$. This is a useful tool since it
contains the spectrum of the matrix and for $D(x, \alpha)\in\mbbR^{(n + 1)\times (n + 1)}$
we find that
\[
		\begin{pmatrix} u^* & v^* \end{pmatrix}\begin{pmatrix} A^TA + \alpha I & x\\
				-x^T & 0 \end{pmatrix}\begin{pmatrix} u\\ v \end{pmatrix} = u^*(A^TA + \alpha I)u
				+ u^*x v - v^* x^T u
\]
with $u\in\mbbC^n$, $v\in\mbbC$ and $\left(u^T, v^T\right)^T \neq 0$.
Since $\alpha > 0$ and $x\in\mbbR^n$, the first term is strictly positive and real
and the last two terms add up to a pure imaginary number. This means that $\forall z
\in W(D): real(z) > 0$, implying that $0$ is not an eigenvalue and hence that $D$ is
invertible.
\begin{lemma}\label{lem:normDinv}
		For any matrix $A\in\mbbR^{m\times n}$, vector $x\in\mbbR^n$ and $\alpha > 0$
		the Schur complement of $D(x, \alpha)$ exists and is given by $s = x^T(A^TA +
		\alpha I)^{-1}x\in\mbbR$. If we set $t := (A^TA + \alpha I)^{-1}x\in\mbbR^n$,
		then it follows that the inverse of $D$ is given by
		\[
				D^{-1}(x, \alpha) = \begin{pmatrix} (A^TA + \alpha I)^{-1} - \frac{t^Tt}{s} &
						-\frac{t}{s}\\ \frac{t^T}{s} & \frac{1}{s}\end{pmatrix}
		\]
		and that the norm of this matrix is bounded:
		\begin{equation}\label{eq:normDinv}
				\left\|D^{-1}\right\|\leq\left(1 + \frac{\left\|x\right\|}{\alpha}\right)^2
						\max\left\{\frac{1}{\alpha}, \frac{\alpha + \lambda_1}{\left\|x\right\|}
						\right\}.
		\end{equation}
		Here, $\lambda_1$ is the largest eigenvalue of $A^TA$.
\end{lemma}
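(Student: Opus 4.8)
The plan is to take the three assertions in turn, each being routine once the previous one is set. Because $\alpha>0$, the matrix $M:=A^TA+\alpha I$ is symmetric with $M\succeq\alpha I\succ0$, hence invertible, so the Schur complement of the $(1,1)$-block of $D(x,\alpha)$ is well defined; with $P=M$, $Q=x$, $R=-x^T$, $S=0$ it equals $S-RP^{-1}Q=x^TM^{-1}x=s$, and since $M^{-1}$ is positive definite, $s>0$ whenever $x\neq0$. (Moreover $\det D=\det M\cdot s$, so $D$ is invertible precisely for $x\neq0$, which is the case in which the formula below makes sense.)

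For the inverse I would substitute the same $P,Q,R,S$ into the standard $2\times2$ block-inverse formula and simplify using the symmetry of $M$, writing $t:=M^{-1}x$ and $x^TM^{-1}=t^T$. The four blocks then collapse to $M^{-1}-s^{-1}tt^T$, $-s^{-1}t$, $s^{-1}t^T$, $s^{-1}$ — exactly the claimed matrix, the $(1,1)$-correction being the rank-one outer product $tt^T/s$. I would then verify $D\,D^{-1}=I_{n+1}$ by direct multiplication (a one-liner using $x^Tt=s$) rather than rely on the general formula.

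For the norm I would bound $\|D^{-1}\|$ by the spectral norm of the nonnegative, symmetric $2\times2$ matrix $\begin{pmatrix}\|M^{-1}-s^{-1}tt^T\| & s^{-1}\|t\|\\ s^{-1}\|t\| & s^{-1}\end{pmatrix}$, using the facts that the operator norm of a block matrix is at most that of the matrix formed from the operator norms of its blocks, and that a symmetric nonnegative $\begin{pmatrix}a&b\\b&d\end{pmatrix}$ has norm at most $\max\{a,d\}+b$. Then feed in: $\|M^{-1}-s^{-1}tt^T\|\le\|M^{-1}\|\le1/\alpha$ (the correction is $\succeq0$ and $\preceq M^{-1}$ by Cauchy--Schwarz in the $M^{-1}$-inner product); $\|t\|=\|M^{-1}x\|\le\|x\|/\alpha$; and $s=x^TM^{-1}x\ge\|x\|^2/\|M\|=\|x\|^2/(\alpha+\lambda_1)$, so $s^{-1}\le(\alpha+\lambda_1)/\|x\|^2$. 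Writing $\nu:=\max\{1/\alpha,(\alpha+\lambda_1)/\|x\|^2\}$ we get $\max\{\|M^{-1}-s^{-1}tt^T\|,s^{-1}\}\le\nu$ and, keeping $s^{-1}\|t\|$ grouped as $(\|x\|/\alpha)\,s^{-1}\le(\|x\|/\alpha)\nu$, conclude $\|D^{-1}\|\le\nu+(\|x\|/\alpha)\nu=(1+\|x\|/\alpha)\nu\le(1+\|x\|/\alpha)^2\nu$, which is \eqref{eq:normDinv} (with the second argument of the maximum read as $(\alpha+\lambda_1)/\|x\|^2$, as a scaling check with $A=0$ and $\|x\|\to0$ confirms is necessary).

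The only real subtlety, and where I expect to have to be careful, is that last combination: estimating the three block norms separately and summing loses a factor of $2$, so one must keep $s^{-1}\|t\|=(\|x\|/\alpha)\,s^{-1}$ grouped in order to produce a single factor $(1+\|x\|/\alpha)$, whose exponent can then be raised to $2$ for free. Everything else — positive-definiteness of $M$, the block-inverse identity, and the eigenvalue bounds on $\|M^{-1}\|$ and $s$ — is entirely standard.
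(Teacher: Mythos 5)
Your proof is correct, and your reading of the bound---with $(\alpha+\lambda_1)/\|x\|^2$ rather than $(\alpha+\lambda_1)/\|x\|$---is the right one: the lemma as printed (and the corresponding min--max step in the paper, which asserts $s\geq\|x\|/(\alpha+\lambda_1)$) drops a square, exactly as your $A=0$, $\|x\|\to 0$ scaling check shows; likewise the $(1,1)$ block should be the outer product $tt^T/s$, as you write. Where you genuinely diverge from the paper is in how the norm bound is extracted. The paper factors the inverse as
\[
		D^{-1}=\begin{pmatrix}I & -t\\ 0 & I\end{pmatrix}
		\begin{pmatrix}(A^TA+\alpha I)^{-1} & 0\\ 0 & \tfrac{1}{s}\end{pmatrix}
		\begin{pmatrix}I & 0\\ t^T & I\end{pmatrix},
\]
bounds each outer triangular factor by $1+\|t\|\leq 1+\|x\|/\alpha$ and the middle block-diagonal factor by $\max\{1/\alpha,1/s\}$, which is where the factor $(1+\|x\|/\alpha)^2$ originates. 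You instead bound the blocks of the explicit inverse directly, using the matrix-of-block-norms estimate, the bound $\max\{a,d\}+b$ for a nonnegative symmetric $2\times 2$ matrix, and the Loewner-order observation $0\preceq (A^TA+\alpha I)^{-1}-tt^T/s\preceq (A^TA+\alpha I)^{-1}$ via Cauchy--Schwarz in the $(A^TA+\alpha I)^{-1}$-inner product. Your route requires these extra (standard) ingredients but delivers the slightly sharper constant $(1+\|x\|/\alpha)\max\{1/\alpha,(\alpha+\lambda_1)/\|x\|^2\}$ before you deliberately relax it to the stated squared factor; the paper's factorization argument is shorter and needs nothing beyond the triangular-factor bounds, at the price of that extra factor. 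Both yield the lemma in its corrected form, and your explicit remark that invertibility (and the formula itself) requires $x\neq 0$ makes precise a point the paper leaves implicit.
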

\begin{proof}
		First note that since $A^TA$ is positive semi-definite, the eigenvalues are given by
		$\lambda_1\geq\lambda_2\geq\ldots\geq\lambda_n\geq 0$. This means that $\left(A^TA + \alpha I\right)$
		is invertible because it has eigenvalues $\lambda_1 + \alpha\geq\lambda_2 + \alpha\geq\ldots\geq\lambda_n
		+ \alpha > 0$. As a result, the Schur complement of $D$ exists and the formula
		for $D^{-1}$ can easily be verified, see for example \cite{zhang2006}. It now also
		follows that the eigenvalues of $\left(A^TA + \alpha I\right)^{-1}$ are given by
		\[
				\frac{1}{\alpha + \lambda_n}\geq\frac{1}{\alpha + \lambda_{n - 1}}\geq\ldots
						\geq\frac{1}{\alpha + \lambda_1} > 0
		\]
		and thus that
		\[
				\left\|\left(A^TA + \alpha I\right)^{-1}\right\|\leq\frac{1}{\alpha +
						\lambda_n}\leq\frac{1}{\alpha}
		\]
		and 
		\[
				\left\|t\right\|\leq\frac{\left\|x\right\|}{\alpha}\quad\text{and}\quad
						\left\|s\right\|\leq\frac{\left\|x\right\|^2}{\alpha}.
		\]
		We now write
		\[
				D^{-1} = \begin{pmatrix}I & -t\\0 & I\end{pmatrix}\begin{pmatrix}\left(
						A^TA + \alpha I\right)^{-1} & 0\\0 & \frac{1}{s}\end{pmatrix}
						\begin{pmatrix}I & 0\\t^T & I\end{pmatrix}
		\]
		and will estimate a bound on the norm of all three matrices. For the first
		matrix we find that for any unit vector $\left(u^T, v^T\right)^T\in\mbbR^n\times\mbbR$:
		\begin{align*}
				\left\|\begin{pmatrix}I & -t\\0 & I\end{pmatrix}\begin{pmatrix}u\\v\end{pmatrix}
						\right\| &= \left\|\begin{pmatrix}u - tv\\v\end{pmatrix}\right\|\\
				&\leq\sqrt{u^Tu - 2u^Ttv + v^2t^Tt + v^2}\\
				&\leq\sqrt{1 + 2\left\|t\right\| + \left\|t\right\|^2}\\
				&\leq\sqrt{\left(1 + \left\|t\right\|\right)^2}\\
				&\leq 1 + \frac{\left\|x\right\|}{\alpha}
		\end{align*}
		Analogously, the same bound can be found for the third matrix. For the
		second matrix we have that
		\[
				\left\|\begin{pmatrix}\left(A^TA + \alpha I\right)^{-1} & 0\\ 0 & \frac{1}{s}
						\end{pmatrix}\right\| = \max\left\{\left\|\left(A^TA + \alpha I\right)^{-1}
						\right\|, \frac{1}{s}\right\}.
		\]
		It now follows from the min-max theorem \cite{wilkinson1965} that
		\[
				s = x^T\left(A^TA + \alpha I\right)^{-1}x\geq\frac{\left\|x\right\|}
						{\alpha + \lambda_1}.
		\]
		Combining all these results proves the lemma.
\end{proof}

%%%%%%%%%%%%%%%%%%%%%%%%%%%%%%%%%%%%%%%%%%%%%%%%%%%%%%%%%%%%%%%%%%%%%%%%%%%%%%%%

\subsection{Step size}
We already showed that for points on the discrepancy curve, the inverse Jacobian
exists and has a bounded norm. However, even when we start from a point on the
discrepancy curve, there is no guarantee that the Newton iterations will remain
on this curve. Hence, we are not certain that the linear systems for the Newton update
will not become singular. In order to avoid this, we will consider two conditions which are
sufficient for the Newton iterations to converge:
\begin{enumerate}[\indent(C1)]
		\item The inverse Jacobian exists in the next iteration $(x_k, \alpha_k)$.\label{c1}
		\item The size of the Newton search direction $\left\|\left(\Delta x_k^T, \Delta\alpha_k
				\right)^T\right\|$ decreases.\label{c2}
\end{enumerate}
We now show that by placing a bound on the step size of the Newton iterations
both conditions can be fulfilled.

In order to derive this bound, we write the Jacobian in any point as a perturbed
version of the matrix $D$ using \eqref{eq:temp2}:
\begin{equation}\label{eq:JDplusE}
		\begin{aligned}
				J(x_k, \alpha_k) =& \begin{pmatrix} A^TA + \alpha_kI & x_k \\ -x_k +
						\frac{\Delta\alpha_k}{\alpha_k}\Delta x_k & 0 \end{pmatrix}\\
				=& \begin{pmatrix} A^T A + \alpha_{k - 1}I & x_{k - 1} \\ -x_{k - 1}^T
						& 0 \end{pmatrix} + \begin{pmatrix} \Delta\alpha_k I & \Delta x_k \\
						-\frac{\alpha_{k - 1}}{\alpha_{k - 1} + \Delta\alpha_k}\Delta x_k^T & 0\end{pmatrix}.
		\end{aligned}
\end{equation}
We also replace the Newton updates with a scaled version
\[
		x_k = x_{k - 1} + \gamma_k\Delta x_k\qquad\text{and}\qquad
				\alpha_k = \alpha_{k - 1} + \gamma_k\Delta\alpha_k
\]
with
\[
		\gamma_k\in I_k := \left\{\begin{aligned}
				&\left]0, 1\right] && \text{if } \Delta\alpha_k > 0\\
				&\left]0, 1\right] && \text{if } \Delta\alpha_k < 0\text{ and }\alpha_{k - 1}
						+ \Delta\alpha_k > 0\\
				&\left]0, -\omega\alpha_{k - 1}/\Delta\alpha_k\right] &&
						\text{if } \Delta\alpha_k < 0\text{ and }\alpha_{k - 1} + \Delta\alpha_k < 0
		\end{aligned}\right.
\]
and a tolerance value $\omega\in]0, 1[$. This is to ensure that the iterates for
$\alpha_k$ remain positive and the reason why we consider three different cases will
become clear in \hypref{lemma}{lem:theta}. This means that \eqref{eq:JDplusE} becomes
\[
		J(x_k, \alpha_k) = \underbrace{\begin{pmatrix} A^T A + \alpha_{k - 1}I & x_{k - 1} \\
				-x_{k - 1}^T & 0 \end{pmatrix}}_{D_{k - 1}^{-1}:=} + \underbrace{\gamma_k\begin{pmatrix}
				\Delta\alpha_k I & \Delta x_k \\ -\zeta_k\Delta x_k^T & 0\end{pmatrix}}_{E_k:=}.
\]
with $\zeta_k = \alpha_{k - 1}/(\alpha_{k - 1} + \gamma_k\Delta\alpha_k)$. We
also define the matrix
\[
		M_k := \gamma_k\begin{pmatrix} \Delta\alpha_kI & 0 \\ \frac{1}{2}\Delta
				x_k^TA^T A & 0 \end{pmatrix}.
\]
Note that we have already shown that $D_{k - 1} = D(x_{k - 1}, \alpha_{k - 1})$
has a bounded inverse, so we can use the following theorem:
\begin{theorem}[Trefethen and Embree]\label{thm:tref}
		Suppose D has a bounded inverse $D^{-1}$, then for any $E$ with $\left\|E\right\| < 
		1/\left\|D^{-1}\right\|$, $D + E$ has a bounded inverse $(D + E)^{-1}$ satisfying
		\[
				\left\|(D + E)^{-1}\right\|\leq\frac{\left\|D^{-1}\right\|}{1 - \left\|E\right\|
						\left\|D^{-1}\right\|}
		\]
		Conversely, for any $\mu > 1/\left\|D^{-1}\right\|$, there exists an $E$ with
		$\left\|E\right\| < \mu$ such that $(D + E)u = 0$ for some non zero $u$.
\end{theorem}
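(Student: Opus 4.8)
The plan is to dispatch the two halves of the statement independently: the forward implication by a Neumann-series argument, and the converse by an explicit rank-one perturbation built from a maximising vector of $D^{-1}$.

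For the first part, I would factor $D + E = D(I + D^{-1}E)$. Since the induced norm is submultiplicative, $\|D^{-1}E\| \le \|D^{-1}\|\,\|E\| < 1$ by hypothesis, so the Neumann series $\sum_{j \ge 0} (-D^{-1}E)^j$ converges in operator norm to $(I + D^{-1}E)^{-1}$, with $\|(I + D^{-1}E)^{-1}\| \le \sum_{j\ge 0}\|D^{-1}E\|^j = (1 - \|D^{-1}E\|)^{-1} \le (1 - \|E\|\,\|D^{-1}\|)^{-1}$, the last step because $x \mapsto 1/(1-x)$ is increasing on $[0,1)$. Hence $D + E$ is invertible, $(D+E)^{-1} = (I + D^{-1}E)^{-1}D^{-1}$, and taking norms gives the claimed bound. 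This part is routine.

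For the converse, fix $\mu > 1/\|D^{-1}\|$. Because $D$ acts on a finite-dimensional space, the unit sphere is compact and the supremum defining $\|D^{-1}\|$ is attained, so there is a unit vector $v$ with $\|D^{-1}v\| = \|D^{-1}\| > 1/\mu$. Put $u := D^{-1}v$, so that $u \ne 0$, $Du = v$, and $\|u\| = \|D^{-1}\| > 1/\mu$, and take the rank-one perturbation $E := -\|u\|^{-2}\, v u^T$. Then $Eu = -\|u\|^{-2} v (u^Tu) = -v = -Du$, hence $(D+E)u = 0$ with $u$ nonzero, while $\|E\| = \|u\|^{-2}\,\|v\|\,\|u\| = 1/\|u\| = 1/\|D^{-1}\| < \mu$. (If one would rather not appeal to attainment of the supremum, choosing any unit $v$ with $\|D^{-1}v\| > 1/\mu$ --- which exists by the definition of the operator norm --- and running the same construction yields $\|E\| = 1/\|D^{-1}v\| < \mu$ just as well.)

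There is no genuine analytic obstacle; the statement is essentially a repackaging of the Neumann series together with the definition of the induced norm. The only point needing a little care is the converse, where the perturbation must \emph{simultaneously} have norm below $\mu$ and exactly annihilate a nonzero vector, which is precisely why $E$ is taken rank one and aligned with the maximising pair $(v, D^{-1}v)$. Since the result is a known one (attributed here to Trefethen and Embree), an alternative in the write-up would be simply to cite it rather than reproduce this short argument.
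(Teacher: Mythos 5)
Your argument is correct, but note that the paper does not prove this statement at all: its ``proof'' is a one-line citation to Trefethen and Embree \cite[p.~28]{trefethen2005}, which is exactly the alternative you mention in your closing sentence. Your write-up is the standard self-contained argument behind that reference: the forward half via the factorization $D+E = D(I+D^{-1}E)$ and the Neumann series, and the converse via the rank-one perturbation $E = -\|u\|^{-2}vu^{T}$ with $u = D^{-1}v$ for a norm-attaining unit vector $v$, giving $(D+E)u=0$ and $\|E\| = 1/\|D^{-1}\| < \mu$; both computations check out, and your fallback (taking any unit $v$ with $\|D^{-1}v\| > 1/\mu$ so that $\|E\| = 1/\|D^{-1}v\| < \mu$) correctly removes the reliance on attainment of the supremum, which is anyway harmless here since the paper only applies the theorem to the finite-dimensional matrix $D_{k-1}\in\mbbR^{(n+1)\times(n+1)}$. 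What your route buys is a self-contained statement independent of the cited monograph (whose setting is more general, operator-theoretic, and phrased in terms of pseudospectra); what the paper's route buys is brevity, since the result is classical and peripheral to the paper's contribution.
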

\begin{proof}
		For a proof of this theorem we refer to \cite[p. 28]{trefethen2005}.
\end{proof}

\begin{lemma}\label{lem:normEandM}
		For the matrices $E_k, M_k\in\mbbR^{(n + 1)\times(n + 1)}$ defined above,
		the following holds:
		\begin{align*}
				\left\|E_k\right\| &\leq \gamma_k\left(\left|\Delta\alpha_k\right| +
						\sqrt{1 + \zeta_k^2}\left\|\Delta x_k\right\|\right)\\
				\left\|M_k\right\| &= \gamma_k\sqrt{\Delta\alpha_k^2 + \frac{1}{4}\left\|
						A^TA\Delta x_k\right\|^2}.
		\end{align*}
		As a consequence we have that
		\[
				\lim_{\gamma_k\rightarrow 0}\left\|E_k\right\| = 0\qquad\text{and}
						\qquad\lim_{\gamma_k\rightarrow 0}\left\|M_k\right\| = 0.
		\]
\end{lemma}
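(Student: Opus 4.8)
The plan is to bound each of $E_k$ and $M_k$ by testing the matrix against an arbitrary unit vector $(u^T, v)^T \in \mbbR^n \times \mbbR$, so that $\|u\|^2 + v^2 = 1$, and then maximizing the resulting expression; since $\gamma_k > 0$ we can pull the factor $\gamma_k$ out of the norm throughout.

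For $E_k$ I would first write $E_k/\gamma_k$ as the sum of the block-diagonal matrix $\begin{pmatrix}\Delta\alpha_k I & 0\\ 0 & 0\end{pmatrix}$, whose norm is exactly $|\Delta\alpha_k|$, and the off-diagonal matrix $N := \begin{pmatrix} 0 & \Delta x_k \\ -\zeta_k\Delta x_k^T & 0 \end{pmatrix}$. Applying $N$ to a unit vector gives $\bigl(v\,\Delta x_k,\ -\zeta_k\,\Delta x_k^T u\bigr)^T$, whose squared length is $v^2\|\Delta x_k\|^2 + \zeta_k^2(\Delta x_k^T u)^2 \le \|\Delta x_k\|^2\bigl(v^2 + \zeta_k^2\|u\|^2\bigr)$ by Cauchy--Schwarz, and $v^2 + \zeta_k^2\|u\|^2 \le (1+\zeta_k^2)(v^2 + \|u\|^2) = 1 + \zeta_k^2$. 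Hence $\|N\| \le \sqrt{1+\zeta_k^2}\,\|\Delta x_k\|$, and the triangle inequality together with the factor $\gamma_k$ yields the stated bound on $\|E_k\|$.

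For $M_k$ the last column vanishes, so $M_k(u^T,v)^T$ is independent of $v$ and equals $\gamma_k\bigl(\Delta\alpha_k u,\ \frac12 (A^TA\Delta x_k)^T u\bigr)^T$, using the symmetry of $A^TA$ to rewrite $\Delta x_k^T A^TA = (A^TA\Delta x_k)^T$. Writing $w := A^TA\Delta x_k$, the squared length of this image is $\gamma_k^2\, u^T\bigl(\Delta\alpha_k^2 I + \frac14 w w^T\bigr)u$. The matrix $\Delta\alpha_k^2 I + \frac14 w w^T$ is symmetric with largest eigenvalue $\Delta\alpha_k^2 + \frac14\|w\|^2$, attained at $u = w/\|w\|$ (any unit $u$ if $w = 0$); choosing that $u$ together with $v = 0$ shows the bound is attained, which gives the \emph{equality} $\|M_k\| = \gamma_k\sqrt{\Delta\alpha_k^2 + \frac14\|A^TA\Delta x_k\|^2}$.

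Finally, for the limits: $\Delta x_k$ and $\Delta\alpha_k$ solve the Newton system at $(x_{k-1},\alpha_{k-1})$ and therefore do not depend on $\gamma_k$, while $\zeta_k = \alpha_{k-1}/(\alpha_{k-1} + \gamma_k\Delta\alpha_k) \to 1$ as $\gamma_k \to 0^+$ (the restriction $\gamma_k \in I_k$ keeps the denominator positive, so $\zeta_k$ remains well defined and bounded). Thus the quantities inside the parentheses of both bounds stay bounded as $\gamma_k \to 0$, and the explicit prefactor $\gamma_k$ forces $\|E_k\| \to 0$ and $\|M_k\| \to 0$. None of this is genuinely hard; the only points requiring a little care are exhibiting the maximizing vector so as to get equality (not just an inequality) for $\|M_k\|$, and noting that $\zeta_k$ has a finite limit so that the $\gamma_k$ factor really does drive both norms to zero.
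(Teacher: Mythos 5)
Your proposal is correct and follows essentially the same route as the paper: split $E_k$ into its diagonal and off-diagonal blocks, bound the off-diagonal block by $\sqrt{1+\zeta_k^2}\,\|\Delta x_k\|$ via Cauchy--Schwarz, compute $\|M_k\|$ exactly by exhibiting the maximizer $u = A^TA\Delta x_k/\|A^TA\Delta x_k\|$, $v=0$, and conclude the limits from $\zeta_k \to 1$ as $\gamma_k \to 0$. Your explicit treatment of the $w=0$ case and the remark that $\Delta x_k$, $\Delta\alpha_k$ do not depend on $\gamma_k$ are minor additions of care, not a different argument.
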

\begin{proof}
		Using the triangle inequality we find that
		\[
				\left\|E_k\right\|\leq\gamma_k\left(\left\|\begin{pmatrix} \Delta\alpha_kI &
						0 \\ 0 & 0 \end{pmatrix}\right\| + \left\|\begin{pmatrix} 0 & \Delta x_k \\
						-\zeta_k\Delta x_k^T & 0 \end{pmatrix}\right\|\right).
		\]
		The first matrix is a diagonal matrix with entries $\Delta\alpha_k$ and $0$,
		hence its norm is equal to $\left|\Delta\alpha_k\right|$. For the second matrix
		we take $u\in\mbbR^n$ and $v\in\mbbR$ and find that
		\begin{align*}
				\left\|\begin{pmatrix} 0 & \Delta x_k \\ -\zeta_k\Delta x_k^T & 0 \end{pmatrix}
						\begin{pmatrix} u \\ v \end{pmatrix}\right\| &= \left\|\begin{pmatrix}
						\Delta x_kv \\ -\zeta_k\Delta x_k^Tu\end{pmatrix}\right\| =
						\sqrt{\left\|\Delta x_kv\right\|^2 + \left\|\zeta_k\Delta x_k^Tu\right\|^2}\\
				&\leq \sqrt{v^2 + \zeta^2\left\|u\right\|^2}\left\|\Delta x_k\right\|\\
				\Rightarrow\left\|\begin{pmatrix} 0 & \Delta x_k \\ -\zeta_k\Delta x_k^T & 0
						\end{pmatrix}\right\| &\leq \sqrt{1 + \zeta^2}\left\|\Delta x_k\right\|.
		\end{align*}
		The statement about $\left\|E_k\right\|$ now follows. Similarly, we find for
		$M_k$ that
		\begin{align*}
				\left\|M_k\begin{pmatrix}u \\ v\end{pmatrix}\right\| &= \left\|
						\gamma_k\begin{pmatrix}\Delta\alpha_ku \\ \frac{1}{2}\Delta
						x_k^TA^TAu\end{pmatrix}\right\| = \gamma\sqrt{\Delta\alpha_k^2\left\|
						u\right\|^2 + \frac{1}{4}\left\|\Delta x_k^TA^TAu\right\|^2}\\
				&\leq \gamma\sqrt{\Delta\alpha_k^2 + \frac{1}{4}\left\|A^TA\Delta x_k
						\right\|^2}\left\|u\right\|
		\end{align*}
		By taking $(u, v) = \frac{1}{\left\|A^TA\Delta x_k\right\|}(A^TA\Delta x_k, 0)$ this is an
		equality, proving the statement about $\left\|M_k\right\|$. Finally, it should
		be noted that $\lim_{\gamma_k\rightarrow 0}\zeta_k = 1$, so for $\gamma_k\rightarrow 0$,
		the norms of both matrices also go to $0$.
\end{proof}

\begin{theorem}\label{thm:gamma}
		Starting from an initial point $(x_0, \alpha_0)$ satisfying the Tikhonov normal
		equations $F_1(x_0, \alpha_0) = 0$, there exist $\gamma_k\in I_k$ such that
		\begin{align}
				&\left\|E_k\right\|\left\|D_{k - 1}^{-1}\right\| < 1\label{eq:constr1}\\
				&\frac{\left\|D_{k - 1}^{-1}\right\|}{1 - \left\|E_k\right\|\left\|D_{k - 1}^{-1}\right\|}
						\left\|M_k\right\| < 1\label{eq:constr2}.
		\end{align}
		Scaling the Newton search direction with such a step size $\gamma_k$ is sufficient
		for the Newton iterations to converge.
\end{theorem}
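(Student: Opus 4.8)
The plan is to argue by induction on the iteration index $k$: at each step I would exhibit an admissible $\gamma_k$ for which \eqref{eq:constr1}--\eqref{eq:constr2} hold, read off from these the two conditions (C1)--(C2), and finally extract convergence of the whole sequence from the persistence of (C1)--(C2). So suppose inductively that $\alpha_{k-1}>0$ and $x_{k-1}\neq 0$; for $k=1$ this is automatic at the chosen starting point, since $F_1(x_0,\alpha_0)=0$ means $x_0=(A^TA+\alpha_0 I)^{-1}A^Tb$, which is nonzero unless $A^Tb=0$. Then $D_{k-1}=D(x_{k-1},\alpha_{k-1})$ has a bounded inverse by Lemma~\ref{lem:normDinv}, with \eqref{eq:normDinv} giving an explicit finite bound on $\|D_{k-1}^{-1}\|$. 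By Lemma~\ref{lem:normEandM}, $\|E_k\|\to 0$ and $\|M_k\|\to 0$ as $\gamma_k\to 0^+$, and in each of the three cases defining $I_k$ this interval contains a right-neighbourhood $]0,\delta_k]$ of the origin. Hence I would first take $\gamma_k$ small enough that $\|E_k\|\,\|D_{k-1}^{-1}\|<1$, which is \eqref{eq:constr1}; since the quotient $\|D_{k-1}^{-1}\|/(1-\|E_k\|\,\|D_{k-1}^{-1}\|)$ then stays bounded while $\|M_k\|\to 0$, a further shrinking of $\gamma_k$ yields \eqref{eq:constr2}. Any $\gamma_k\in I_k$ below both thresholds works.

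Next, using $J(x_k,\alpha_k)=D_{k-1}+E_k$ from \eqref{eq:JDplusE}, inequality \eqref{eq:constr1} together with Theorem~\ref{thm:tref} shows $J(x_k,\alpha_k)$ is invertible with $\|J(x_k,\alpha_k)^{-1}\|\le\|D_{k-1}^{-1}\|/(1-\|E_k\|\,\|D_{k-1}^{-1}\|)$; this is condition (C1), it makes the next search direction $d_{k+1}:=(\Delta x_{k+1}^T,\Delta\alpha_{k+1})^T$ well defined, and since $\gamma_k\in I_k$ keeps $\alpha_k>0$, the induction hypothesis is restored for step $k+1$. Feeding this bound into the scaled version of the recurrence \eqref{eq:temp} between consecutive search directions yields $\|d_{k+1}\|\le c_k\,\|d_k\|$ with $c_k:=\frac{\|D_{k-1}^{-1}\|}{1-\|E_k\|\,\|D_{k-1}^{-1}\|}\|M_k\|$, and \eqref{eq:constr2} is precisely $c_k<1$, i.e. condition (C2).

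To turn this into convergence one observes that $(\|d_k\|)_{k\ge 1}$ is then strictly decreasing; strengthening the choice of $\gamma_k$ in the first paragraph so that in addition $c_k\le\rho$ for a fixed $\rho\in\,]0,1[$ — possible by the same limiting argument, provided $\|D_{k-1}^{-1}\|$ remains under control — gives $\|d_k\|\le\rho^{k-1}\|d_1\|\to 0$, hence $\sum_k\|\gamma_k d_k\|\le\sum_k\|d_k\|<\infty$, so $(x_k,\alpha_k)$ is Cauchy with some limit $(x^\star,\alpha^\star)$. Passing to the limit in the identity of the first lemma of this section for $F(x_k,\alpha_k)$ — which, after the step rescaling, expresses $F(x_k,\alpha_k)$ through quantities quadratic in $d_k$ plus contributions inherited from $F(x_{k-1},\alpha_{k-1})$ with factors $1-\gamma_k$, all tending to $0$ — then gives $F(x^\star,\alpha^\star)=0$, i.e. $(x^\star,\alpha^\star)$ solves \eqref{eq:tikmor}.

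I expect the main obstacle to be exactly the uniformity invoked above: ruling out that the admissible $\gamma_k$ shrink to $0$ along the iteration, which amounts to a uniform-in-$k$ bound on $\|D_{k-1}^{-1}\|$, i.e. $\alpha_k$ bounded away from $0$ and $\|x_k\|$ bounded above and away from $0$. This is where the three-case definition of $I_k$, and the ensuing bound on $\zeta_k=\alpha_{k-1}/(\alpha_{k-1}+\gamma_k\Delta\alpha_k)$ that it enforces (cf. Lemma~\ref{lem:theta}), should be combined with the monotone decrease of $\|d_k\|$, which limits how far the iterates can move; controlling the degenerate possibilities $\alpha_k\to 0$ or $x_k\to 0$ will be the delicate part and is where the starting-point hypothesis $F_1(x_0,\alpha_0)=0$ must really be used.
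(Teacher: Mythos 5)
Your construction of the admissible step size is exactly the paper's argument: $D_{k-1}$ is invertible with bounded inverse via Lemma~\ref{lem:normDinv}, $\left\|E_k\right\|,\left\|M_k\right\|\rightarrow 0$ as $\gamma_k\rightarrow 0$ (Lemma~\ref{lem:normEandM}) so sufficiently small $\gamma_k\in I_k$ satisfy \eqref{eq:constr1}--\eqref{eq:constr2} (with your two-step shrinking in place of the paper's remark that \eqref{eq:constr2} implies \eqref{eq:constr1}), Theorem~\ref{thm:tref} then gives condition (C1), and the recurrence \eqref{eq:temp} turns \eqref{eq:constr2} into the decrease of the search direction, i.e.\ condition (C2), just as in the paper. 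The additional work you sketch in your last two paragraphs (a uniform contraction factor, Cauchy iterates, passing to the limit to get $F=0$) and the uniformity gap you honestly flag (keeping $\alpha_k$, $\left\|x_k\right\|$ and hence $\left\|D_{k-1}^{-1}\right\|$ under control so the admissible $\gamma_k$ do not shrink to zero) go beyond the paper, whose proof stops once (C1)--(C2) are secured and simply takes these two conditions as sufficient for convergence, so on everything the paper actually establishes your proposal agrees with its proof.
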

\begin{proof}
		If \eqref{eq:constr1} holds, then it follows from \hypref{theorem}{thm:tref}
		that the inverse Jacobian $J^{-1}(x_k, \alpha_k) = \left(D_{k - 1} + E_k\right)^{-1}$ exists,
		fulfilling condition \hyprefp{C}{c1}. Furthermore, from the recursion between the
		Newton updates \eqref{eq:temp} it also follows that
		\[
				\left\|J^{-1}(x_k, \alpha_k)\begin{pmatrix}\Delta\alpha_kI & 0\\
						\frac{1}{2}\Delta x_k^TA^T A & 0\end{pmatrix}\right\| < 1.
		\]
		is a sufficient condition for \hyprefp{C}{c2} to hold. \eqref{eq:constr2} is simply a
		stronger version of this condition using the bound on $\left\|J^{-1}
		(x_k, \alpha_k)\right\|$ given by \hypref{theorem}{thm:tref}.
		
		It now remains to be shown that such a $\gamma_k$ always exists. Since
		\eqref{eq:constr2} is equivalent to
		\[
				\left\|M_k\right\|\left\|D_{k - 1}^{-1}\right\| < 1 - \left\|E_k\right\|
						\left\|D_{k - 1}^{-1}\right\|
		\]
		and the left hand side is positive, \eqref{eq:constr1} is implied by \eqref{eq:constr2}.
		Also, since the left hand side goes to $0$ when
		$\gamma_k\rightarrow 0$ and the right hand side goes to 1, there
		will always exist $\gamma_k\in I_k$ fulfilling both criteria. Finally, by
		starting from a point $(x_0, \alpha_0)$ satisfying the Tikhonov normal equations,
		we know that the inverse Jacobian exists in the first iteration.
\end{proof}

From this theorem it follows that as long as $\gamma_k$ is chosen small enough,
the Newton iterations will converge. Small values will however lead to slow 
convergence, so we will derive an upper bound for $\gamma_k$. In order to do this
we will simplify the dependency of the upper bound for $\left\|E_k\right\|$ found
in \hypref{lemma}{lem:normEandM} on $\sqrt{1 + \zeta_k^2}$.
\begin{lemma}\label{lem:theta}
		For all $\omega\in]0, 1[$ the following holds:
		\[
				\sqrt{1 + \zeta^2}\leq
				\left\{\begin{aligned}
						&\sqrt{2} && \text{If }\Delta\alpha_k > 0\\
						&\sqrt{1 + \left(\frac{\alpha_{k - 1}}{\alpha_{k - 1} + \Delta\alpha_k}\right)^2}
								&& \text{If }\Delta\alpha_k < 0\text{ and }\alpha_{k - 1} + \Delta\alpha_k > 0\\
						&\sqrt{1 + \frac{1}{\left(1 - \omega\right)^2}} && \text{If } \Delta\alpha_k < 0\text{ and }
								\alpha_{k - 1} + \Delta\alpha_k < 0
				\end{aligned}\right.
		\]
\end{lemma}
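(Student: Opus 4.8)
The plan is to treat the three cases in the definition of $I_k$ separately, in each case determining the exact range of the scalar $\gamma_k\Delta\alpha_k$ and feeding it into the formula $\zeta_k = \alpha_{k-1}/(\alpha_{k-1} + \gamma_k\Delta\alpha_k)$. The one structural fact that makes everything work — and this is really why $I_k$ was defined the way it was — is that in all three cases the denominator $\alpha_{k-1} + \gamma_k\Delta\alpha_k$ stays strictly positive, so $\zeta_k$ is a well-defined positive real and it is enough to produce an upper bound for it; the bound on $\sqrt{1+\zeta_k^2}$ then follows by monotonicity.

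In the first case, $\Delta\alpha_k > 0$ and $\gamma_k\in\,]0,1]$, so $\gamma_k\Delta\alpha_k > 0$ and hence $\alpha_{k-1}+\gamma_k\Delta\alpha_k > \alpha_{k-1} > 0$; this gives $0 < \zeta_k < 1$ and therefore $\sqrt{1+\zeta_k^2} \le \sqrt{2}$. In the second case, $\Delta\alpha_k < 0$ with $\alpha_{k-1}+\Delta\alpha_k > 0$ and $\gamma_k\in\,]0,1]$, so $\gamma_k\Delta\alpha_k$ ranges over $[\Delta\alpha_k,0)$, which means $\alpha_{k-1}+\gamma_k\Delta\alpha_k$ ranges over $[\alpha_{k-1}+\Delta\alpha_k,\alpha_{k-1})$ — a positive interval by hypothesis. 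The denominator is smallest at $\gamma_k=1$, so $\zeta_k \le \alpha_{k-1}/(\alpha_{k-1}+\Delta\alpha_k)$, and the stated bound $\sqrt{1+\zeta_k^2}\le\sqrt{1+(\alpha_{k-1}/(\alpha_{k-1}+\Delta\alpha_k))^2}$ follows.

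In the third case, $\Delta\alpha_k < 0$, $\alpha_{k-1}+\Delta\alpha_k < 0$, and $\gamma_k\in\,]0,-\omega\alpha_{k-1}/\Delta\alpha_k]$. Multiplying the inequality $\gamma_k \le -\omega\alpha_{k-1}/\Delta\alpha_k$ by the negative number $\Delta\alpha_k$ reverses it and yields $\gamma_k\Delta\alpha_k \ge -\omega\alpha_{k-1}$, while $\gamma_k > 0$ gives $\gamma_k\Delta\alpha_k < 0$; hence $\alpha_{k-1}+\gamma_k\Delta\alpha_k$ lies in $[(1-\omega)\alpha_{k-1},\alpha_{k-1})$, which is positive because $\omega\in\,]0,1[$. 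Therefore $\zeta_k \le \alpha_{k-1}/((1-\omega)\alpha_{k-1}) = 1/(1-\omega)$, giving $\sqrt{1+\zeta_k^2}\le\sqrt{1+1/(1-\omega)^2}$.

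I do not expect a genuine obstacle here: the argument is entirely elementary sign bookkeeping driven by the sign of $\Delta\alpha_k$ and the definition of $I_k$. The only place that requires a moment's care is the second and third cases, where one must remember to flip the inequality when multiplying through by $\Delta\alpha_k < 0$ — getting that wrong would be the only way to derail the proof.
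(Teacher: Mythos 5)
Your proof is correct and follows essentially the same route as the paper: in each of the three cases for $I_k$ you bound the denominator $\alpha_{k-1}+\gamma_k\Delta\alpha_k$ from below (the paper phrases this as finding a lower bound on $\left|\alpha_{k-1}+\gamma_k\Delta\alpha_k\right|$ at the appropriate endpoint of $I_k$) and substitute into the expression for $\zeta_k$. Your version is slightly more explicit about the sign bookkeeping, but there is no substantive difference.
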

\begin{proof}
		Finding an upper bound for
		\[
				\sqrt{1 + \zeta^2} = \sqrt{1 + \left(\frac{\alpha_{k - 1}}{\alpha_{k - 1} +
						\gamma_k\Delta\alpha_k}\right)^2}
		\]
		is equivalent to finding a lower bound on $\left|\alpha_{k - 1} +
		\gamma_k\Delta\alpha_k\right|$.
		\begin{itemize}
				\item If $\Delta\alpha_k > 0$, then $I_k = ]0, 1]$ and this lower bound
						is found for $\gamma_k = 0$.
				\item If $\Delta\alpha_k < 0$ and $\alpha_{k - 1} + \Delta\alpha_k > 0$
						(meaning that using the unscaled Newton iteration would give a positive regularization parameter),
						then $I_k = ]0, 1]$ and this lower bound is found for $\gamma_k = 1$.
				\item If $\Delta\alpha_k < 0$ and $\alpha_{k - 1} + \Delta\alpha_k < 0$
						(meaning that using the unscaled Newton iteration would give a negative regularization parameter),
						then $I_k = \left]0, -\omega\alpha_{k - 1}/\Delta\alpha_k\right]$. If
						$\omega\rightarrow 1$ then $\alpha_{k - 1} + \gamma_k\Delta\alpha_k
						\rightarrow 0$ and $\sqrt{1 + \zeta^2}\rightarrow +\infty$. In order to
						avoid this we take $\omega\in]0, 1[$ to stay way from this singularity
						and find the lower bound for $\gamma_k = -\omega\alpha_{k - 1}/\Delta\alpha_k$.
		\end{itemize}
		Substituting these values for $\gamma_k$ proves the lemma.
\end{proof}
\begin{corollary}\label{cor:gamma1}
		If $\theta_k$ is the bound on $\sqrt{1 + \zeta_k^2}$ from \hypref{lemma}
		{lem:theta}, then the following step size fulfils the conditions
		\eqref{eq:constr1} and \eqref{eq:constr2} of \hypref{theorem}{thm:gamma}:
		\[
				\gamma_k = \min\left\{\max I_k, \frac{1}{\left(\sqrt{\Delta\alpha_k^2 + \frac{1}{4}
						\left\|A^TA\Delta x_k\right\|^2} + \left|\Delta\alpha_k\right| +
						\theta_k\left\|\Delta x_k\right\|\right)\left\|D_{k - 1}^{-1}\right\|}
						\right\}
		\]
\end{corollary}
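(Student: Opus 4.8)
The plan is to collapse the two requirements of Theorem~\ref{thm:gamma} into a single scalar inequality in $\gamma_k$ and then solve for $\gamma_k$. As already noted inside the proof of Theorem~\ref{thm:gamma}, condition \eqref{eq:constr1} is implied by \eqref{eq:constr2}, and, after multiplying through by its positive denominator, \eqref{eq:constr2} is equivalent to
\[
		\bigl(\|M_k\| + \|E_k\|\bigr)\,\bigl\|D_{k-1}^{-1}\bigr\| < 1 .
\]
So the task is to pick $\gamma_k \in I_k$ for which this holds. I would bound the left-hand side with the estimates already in hand: Lemma~\ref{lem:normEandM} gives $\|M_k\| = \gamma_k\sqrt{\Delta\alpha_k^2 + \frac{1}{4}\|A^TA\Delta x_k\|^2}$ and $\|E_k\| \leq \gamma_k\bigl(|\Delta\alpha_k| + \sqrt{1+\zeta_k^2}\,\|\Delta x_k\|\bigr)$, and Lemma~\ref{lem:theta} eliminates the $\gamma_k$-dependence in the last factor by replacing $\sqrt{1+\zeta_k^2}$ with the constant $\theta_k$. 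Writing $c_k := \sqrt{\Delta\alpha_k^2 + \frac{1}{4}\|A^TA\Delta x_k\|^2} + |\Delta\alpha_k| + \theta_k\|\Delta x_k\|$, this yields $\|M_k\| + \|E_k\| \leq \gamma_k c_k$, hence $\bigl(\|M_k\| + \|E_k\|\bigr)\|D_{k-1}^{-1}\| \leq \gamma_k c_k \|D_{k-1}^{-1}\|$.

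The second entry of the $\min$ in the statement is exactly $1/(c_k\|D_{k-1}^{-1}\|)$, so the stated choice $\gamma_k = \min\{\max I_k,\ 1/(c_k\|D_{k-1}^{-1}\|)\}$ forces $\bigl(\|M_k\| + \|E_k\|\bigr)\|D_{k-1}^{-1}\| \leq \gamma_k c_k\|D_{k-1}^{-1}\| \leq 1$, and $\gamma_k \in I_k$ because $I_k$ is an interval with right endpoint $\max I_k \geq \gamma_k$ (and $\gamma_k>0$). The quotient is well defined: $c_k = 0$ would force $\Delta x_k = 0$, and the second block of \eqref{eq:newtoneq} then shows the discrepancy equation already holds at $(x_{k-1},\alpha_{k-1})$, so there is effectively nothing to do. When $\max I_k$ is strictly the smaller of the two numbers, the resulting inequality is already strict; the only remaining work is to recover strictness in the case $\gamma_k = 1/(c_k\|D_{k-1}^{-1}\|)$.

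That strictness is the one delicate point, and I expect it to be the main obstacle. I would obtain it from the slack built into Lemma~\ref{lem:normEandM}: the off-diagonal block of $E_k/\gamma_k$ has operator norm \emph{exactly} $\max\{1,\zeta_k\}\|\Delta x_k\|$ (a one-line singular-value computation, using $\zeta_k > 0$), and for $\Delta x_k \neq 0$ this is strictly below $\sqrt{1+\zeta_k^2}\,\|\Delta x_k\| \leq \theta_k\|\Delta x_k\|$. Hence $\|E_k\| < \gamma_k(|\Delta\alpha_k| + \theta_k\|\Delta x_k\|)$, so $\|M_k\| + \|E_k\| < \gamma_k c_k$ and the product drops strictly below $1$; by Theorem~\ref{thm:gamma} the corollary's step size then suffices for convergence. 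Two loose ends I would tidy up: the degenerate case $\Delta x_k = 0$ — in which the $x$-iterate does not move and, as just noted, the discrepancy is already satisfied — is dealt with separately; and the case $\Delta\alpha_k = 0$, which is not covered by Lemma~\ref{lem:theta}, is handled by taking $\theta_k = \sqrt{2}$, for which the same slack argument still applies since then $\zeta_k = 1$.
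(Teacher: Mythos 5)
Your proposal is correct and follows essentially the same route as the paper: the paper's one-line proof likewise substitutes the bounds of Lemmas \ref{lem:normEandM} and \ref{lem:theta} into the product form $\left(\|M_k\| + \|E_k\|\right)\|D_{k-1}^{-1}\| < 1$ of \eqref{eq:constr2} and caps the resulting step size by $\max I_k$. Your extra care about strictness at the boundary value $\gamma_k = 1/(c_k\|D_{k-1}^{-1}\|)$ (via the exact norm $\max\{1,\zeta_k\}\|\Delta x_k\|$ of the off-diagonal block) and about the degenerate cases $\Delta x_k = 0$ and $\Delta\alpha_k = 0$ is sound and simply tidies up points that the paper's proof passes over silently.
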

\begin{proof}
		This result is found by replacing $\left\|E_k\right\|$, $\left\|M_k\right\|$ and
		$\sqrt{1 + \zeta^2}$ in \eqref{eq:constr2} by their upperbounds found in \hypref{lemmas}
		{lem:normEandM} and \ref{lem:theta}.
\end{proof}
\begin{corollary}\label{cor:gamma2}
		If $\theta_k$ is the bound on $\sqrt{1 + \zeta_k^2}$ from \hypref{lemma}
		{lem:theta}, then the following step size only fulfils conditions \eqref{eq:constr1}
		of \hypref{theorem}{thm:gamma}:
		\[
				\gamma_k = \min\left\{\max I_k, \frac{1}{\left(\left|\Delta\alpha_k\right|
						+ \theta_k\left\|\Delta x_k\right\|\right)\left\|D_{k - 1}^{-1}\right\|}
						\right\}
		\]				
\end{corollary}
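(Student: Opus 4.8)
The plan is to verify directly that the step size stated in the corollary forces $\left\|E_k\right\|\left\|D_{k-1}^{-1}\right\| < 1$, which is exactly constraint \eqref{eq:constr1}; unlike in \hypref{corollary}{cor:gamma1}, nothing needs to be said about \eqref{eq:constr2}, since the present claim only guarantees condition \hyprefp{C}{c1} (invertibility of the next Jacobian) and not the decrease of the search direction. First I would combine the estimate $\left\|E_k\right\| \leq \gamma_k\left(\left|\Delta\alpha_k\right| + \sqrt{1 + \zeta_k^2}\left\|\Delta x_k\right\|\right)$ of \hypref{lemma}{lem:normEandM} with $\sqrt{1 + \zeta_k^2} \leq \theta_k$ from \hypref{lemma}{lem:theta}, which yields
\[
		\left\|E_k\right\|\left\|D_{k-1}^{-1}\right\| \leq \gamma_k\left(\left|\Delta\alpha_k\right| + \theta_k\left\|\Delta x_k\right\|\right)\left\|D_{k-1}^{-1}\right\|.
\]
Because the proposed $\gamma_k$ is, in particular, bounded above by $\bigl[\left(\left|\Delta\alpha_k\right| + \theta_k\left\|\Delta x_k\right\|\right)\left\|D_{k-1}^{-1}\right\|\bigr]^{-1}$, the right-hand side is at most $1$. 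This mirrors the proof of \hypref{corollary}{cor:gamma1}, except that only the bound on $\left\|E_k\right\|$ is inserted into \eqref{eq:constr1}; the $\left\|M_k\right\|$-term therefore drops out of the denominator and the admissible step grows, which is precisely why this step is no longer strong enough to also enforce \eqref{eq:constr2}.

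The delicate point is sharpening ``$\leq 1$'' to the strict inequality \eqref{eq:constr1} actually demands. I would split on which term realises the $\min$ defining $\gamma_k$. If $\gamma_k = \max I_k$ and this is strictly below $\bigl[\left(\left|\Delta\alpha_k\right| + \theta_k\left\|\Delta x_k\right\|\right)\left\|D_{k-1}^{-1}\right\|\bigr]^{-1}$, the displayed estimate is already strict. If instead $\gamma_k$ equals the second term, strictness has to come from the slack in \hypref{lemma}{lem:normEandM}: whenever $\Delta x_k\neq 0$ that estimate is not tight, since the off-diagonal block $\begin{pmatrix}0 & \Delta x_k\\ -\zeta_k\Delta x_k^T & 0\end{pmatrix}$ has operator norm only $\max\{1,\zeta_k\}\left\|\Delta x_k\right\| < \sqrt{1+\zeta_k^2}\left\|\Delta x_k\right\|$, so $\left\|E_k\right\| < \gamma_k\left(\left|\Delta\alpha_k\right| + \theta_k\left\|\Delta x_k\right\|\right)$. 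The only remaining case is $\Delta x_k = 0$, where one simply picks any step strictly inside the admissible interval; hence $\left\|E_k\right\|\left\|D_{k-1}^{-1}\right\| < 1$ in every relevant situation, and \hypref{theorem}{thm:tref} then gives \hyprefp{C}{c1}.

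In summary, the inequality chain is a one-line substitution of the bounds of \hypref{lemmas}{lem:normEandM} and \ref{lem:theta} into \eqref{eq:constr1}; the main obstacle, and essentially the only subtlety, is the bookkeeping that upgrades the non-strict estimate to a strict one, which I would handle by noting that the bounds inherited from \hypref{lemma}{lem:normEandM} are loose away from trivial configurations.
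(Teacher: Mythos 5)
Your proposal follows essentially the same route as the paper: the paper's proof is precisely the one-line substitution of the bounds from \hypref{lemmas}{lem:normEandM} and \ref{lem:theta} into \eqref{eq:constr1}, combined with the definition of $\gamma_k$ as a minimum. Your additional bookkeeping on strictness is a sound refinement of what the paper leaves implicit (a literal substitution only gives $\left\|E_k\right\|\left\|D_{k-1}^{-1}\right\|\leq 1$ when the second term attains the minimum), and only the degenerate case $\Delta x_k = 0$ forces you to deviate from the stated step size --- an edge case the paper's proof silently ignores as well.
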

\begin{proof}
		This result is found by replacing $\left\|E_k\right\|$ and $\sqrt{1 + \zeta^2}$
		in \eqref{eq:constr1} by their upperbounds found in \hypref{lemmas}{lem:normEandM}
		and \ref{lem:theta}.
\end{proof}
\noindent Combining the results from this section leads to \hypref{algorithm}{alg:ntm}.

\begin{algorithm}
		\caption{Newton on the Tikhonov-Morozov system (NTM)}\label{alg:ntm}
		\begin{algorithmic}[1]
				\State Choose initial $\alpha_0 > 0$ and solve $F_1(x_0, \alpha_0)$ for $x_0$.
				\For{$k = 1, \ldots,$ maxiter}
						\State Solve the Jacobian system \eqref{eq:newtoneq} for $\Delta x_k$ and $\Delta\alpha_k$.\label{alg:ntm:jac}
						\State Calculate $\left\|D^{-1}\right\|$.
						\State Calculate $\theta_k$ using \hypref{lemma}{lem:theta}.
						\State Calculate the step size $\gamma_k$ using \hypref{corollary}{cor:gamma1} or \ref{cor:gamma2}.
						\State $x_k = x_{k - 1} + \gamma_k\Delta x_k$ and $\alpha_k = \alpha_{k - 1} + \gamma_k\Delta\alpha_k$.
						\If{$\left\|F(x_k, \alpha_k)\right\| <$ tol}
										\Break
						\EndIf
				\EndFor
		\end{algorithmic}
\end{algorithm}

%%%%%%%%%%%%%%%%%%%%%%%%%%%%%%%%%%%%%%%%%%%%%%%%%%%%%%%%%%%%%%%%%%%%%%%%%%%%%%%%

\subsection{Remarks}
The reason we consider two possible choices for the step size is because we 
observed in our numerical experiments that both \hypref{corollary}{cor:gamma1}
and \ref{cor:gamma2} seem to result in a small value for the step size. This is
explained by the fact that the constraints placed on $\gamma_k$ in \hypref{theorem}
{thm:gamma} are stronger than \hyprefp{C}{c1} and \hyprefp{C}{c2} and because
we used various overestimations in order to derive an upper bound for $\gamma_k$.

Another thing to note is that it might not be necessary to start from a point
$(x_0, \alpha_0)$ on the discrepancy curve. We use this assumption because it
guarantees the existence of the inverse Jacobian in the first iteration. However,
as \hypref{theorem}{thm:tref} suggests, it would be sufficient to start from a point
for which the perturbation $E$ in the Jacobian $J$ with respect to $D$ sufficiently small.
Instead of choosing an $\alpha_0$ and solving $F_1(x_0, \alpha_0) = 0$ for $x_0$ exactly,
it could suffice to only solve for $x_0$ up to a limited precision.

Finally, for large scale problems, solving the Jacobian system \eqref{eq:newtoneq}
and calculating $\left\|D_{k - 1}^{-1}\right\|$ becomes computationally very
expensive. We could use the upper bound from \hypref{lemma}{lem:normDinv} to partially
solve this problem, but once again, this will only lead to a smaller step size and
slower convergence. These issues will be discussed further on in this paper.

%%%%%%%%%%%%%%%%%%%%%%%%%%%%%%%%%%%%%%%%%%%%%%%%%%%%%%%%%%%%%%%%%%%%%%%%%%%%%%%%

\section{Numerical experiments I}\label{sec:numexp1}
To illustrate the method, we look at a problem with a small random matrix $A$
and solution $x$. More precisely, we take $A\in\mbbR^{700\times 500}$ and $x\in\mbbR^{500}$
with i.i.d. entries drawn from the uniform distribution $\mc{U}(-1, 1)$. Measurements
are generated by adding $10\%$ Gaussian noise to the exact right hand side $b_{ex} = Ax$
using $e\sim\mathcal{N}\left(0, \sigma^2 I_m\right)$ with $\sigma = 0.10\left\|
b_{ex}\right\|/\sqrt{m}$ and setting $b = b_{ex} + e$. For the discrepancy principle,
we will approximate the error norm by $\varepsilon = \sigma\sqrt{m} = 0.10\left\|b_{ex}\right\|$.

We repeat this experiment $1000$ times and for each run we start with
$\alpha_0 = 1$ and solve the Tikhonov normal equations $F_1(x_0, \alpha_0) = 0$
for $x_0$. After that, we start the Newton iterations with $\omega = 0.9$ and
stop when $\left\|F(x_k, \alpha_k)\right\| < 1\snot{-3}$. The results are show in
\hypref{figure}{fig:rms} and \hypref{table}{tab:rms}, where case 1
means that \hypref{corollary}{cor:gamma1} was used to calcuate the step
size and case 2 means that \hypref{corollary}{cor:gamma2} was used.

These results indicate that the overestimations used in our analysis
of the method lead to a small step size. By using
\hypref{corollary}{cor:gamma2} and weakening the constraints placed on $\gamma$,
the method takes substantially larger steps and converges much faster.
How much larger the step sizes can become by
weakening the constraints is of course problem dependent and hard to predict.
Nevertheless, \hyprefp{C}{c2} seems to be a strong constraint placed on the iterations. Also, because both cases converge to the same
solution, the same regularization parameter is found. The small standard deviation
over all the runs indicates that the regularization parameter is quite
similar in all the runs.

\begin{figure}
		\centering
		\includegraphics[width = 0.49\linewidth]{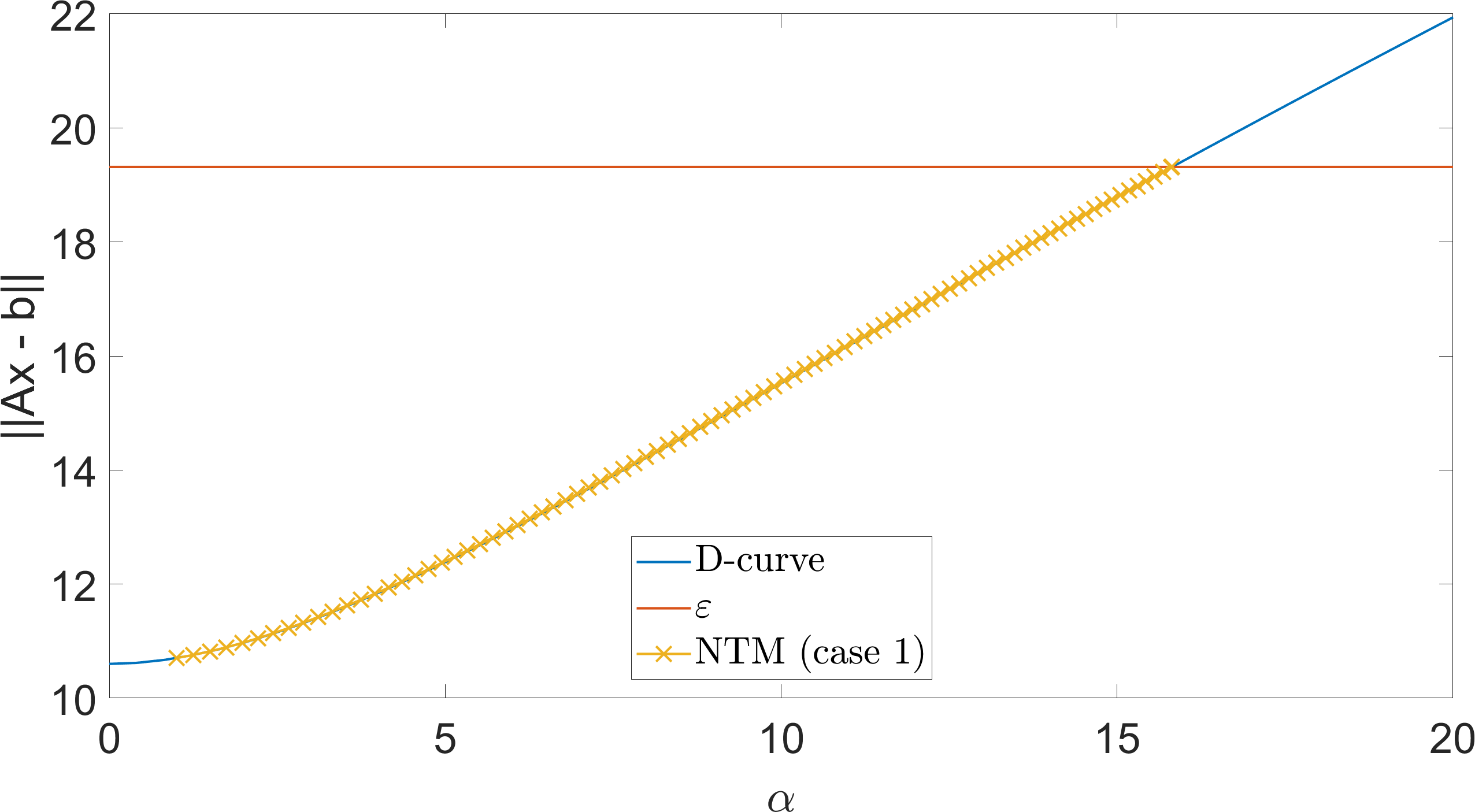}\hspace{2.5pt}
		\includegraphics[width = 0.49\linewidth]{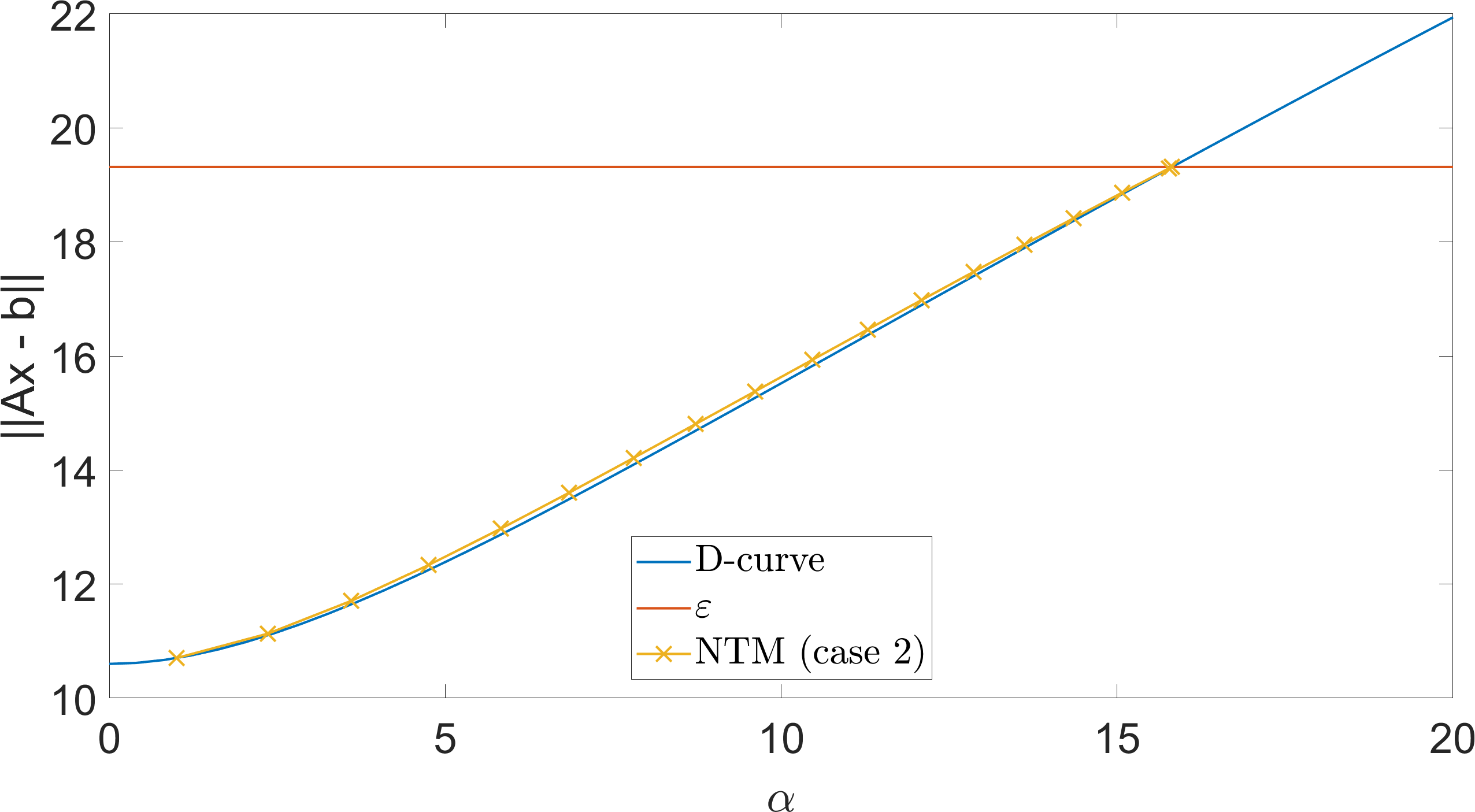}\\[2.5pt]
		\includegraphics[width = 0.49\linewidth]{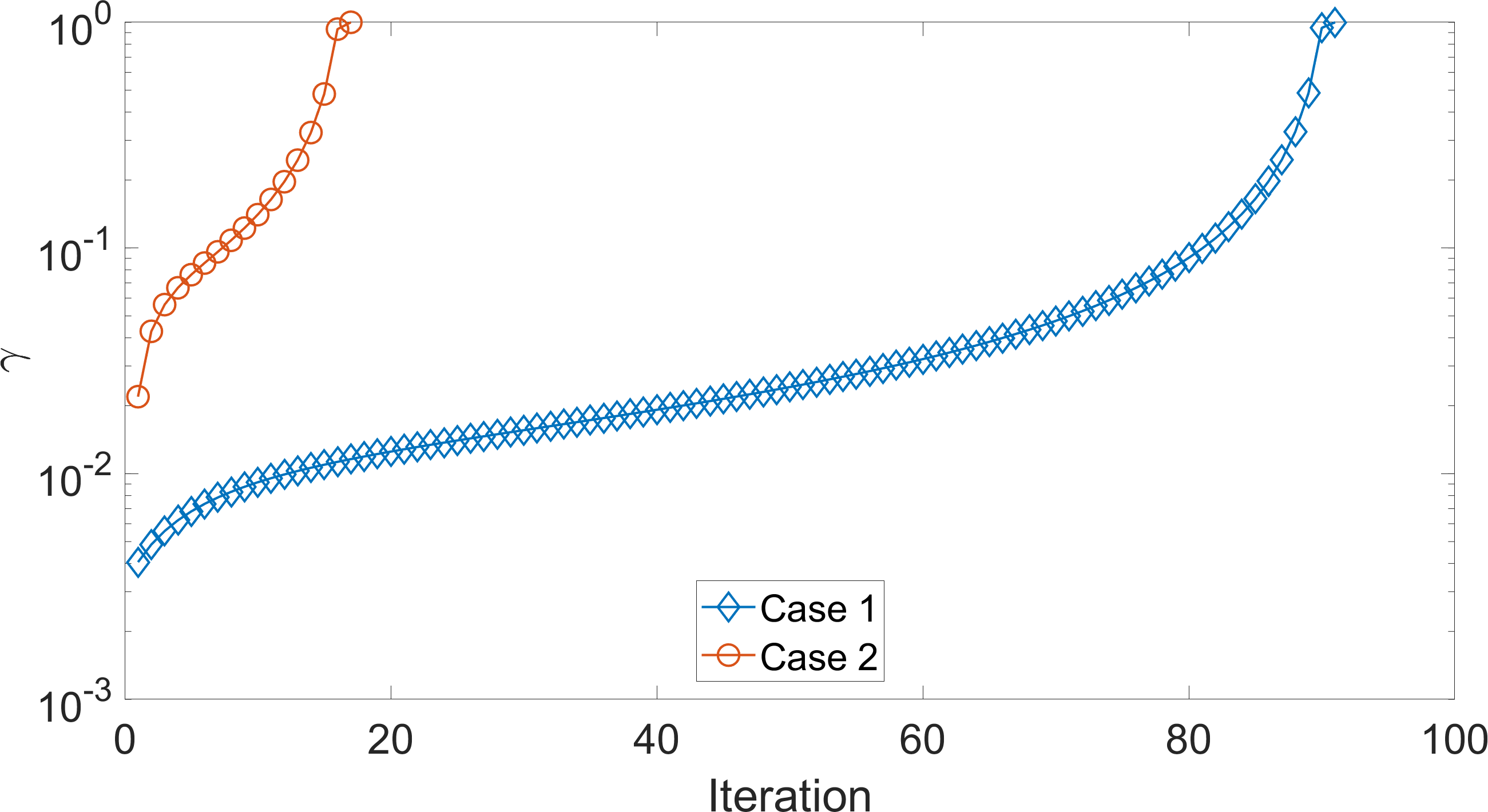}\hspace{2.5pt}
		\includegraphics[width = 0.49\linewidth]{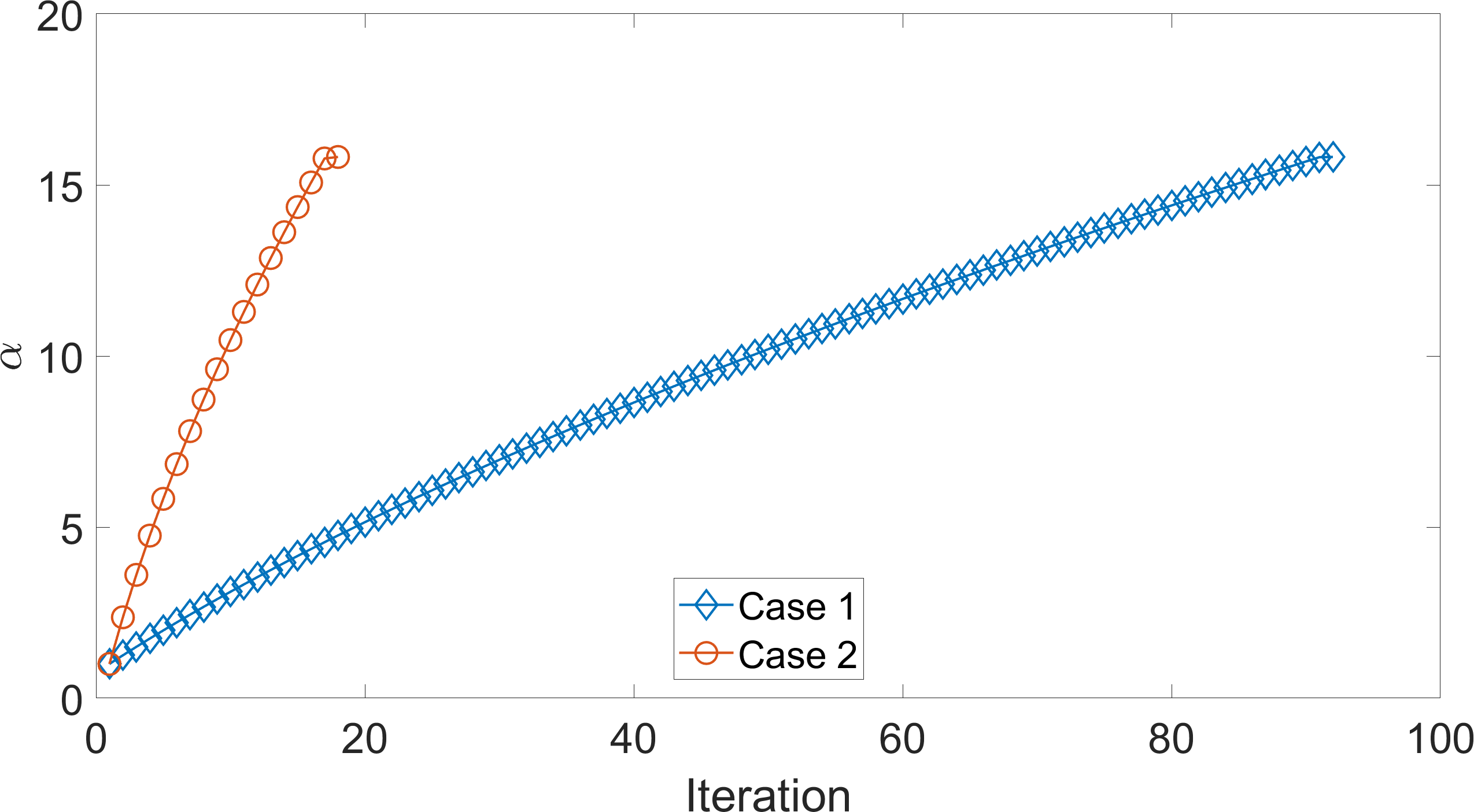}
		\caption{Results for one of the runs. For each Newton iteration, we plot the
				point	$(\alpha_k, \left\|Ax_k - b\right\|)$. The top left figure
				corresponds to case 1 and the top right figure to case 2. Bottom left:
				the value of the step size $\gamma$ used in each iteration. Bottom right:
				the value of the regularization parameter $\alpha$ in each iteration.}
		\label{fig:rms}
\end{figure}

\begin{table}
		\centering
		\begin{tabular}{c||c|c}
				& \# Iterations & $\alpha$\\ \hline\hline
				Case 1 & $85$ ($13$) & $15.6581$ ($1.0947$) \\ \hline
				Case 2 & $16$ ($2$)  & $15.6581$ ($1.0947$)
		\end{tabular}
		\caption{Average number of iterations for the 1000 runs of the experiment and
				the standard deviation (rounded). Because both methods converge to the same
				solution, the same value for $\alpha$ is found in each run, but for all the
				different random matrices its value turns out to be quite similar, hence
				the low standard deviation.}
		\label{tab:rms}
\end{table}

%%%%%%%%%%%%%%%%%%%%%%%%%%%%%%%%%%%%%%%%%%%%%%%%%%%%%%%%%%%%%%%%%%%%%%%%%%%%%%%%

\section{Projected Tikhonov-Morozov system}\label{sec:pntm}
The NTM algorithm can become computationally very expensive because in each
iteration $\left\|D^{-1}\right\|$ needs to be computed and the Jacobian system
\eqref{eq:newtoneq} needs to be solved for $\Delta x$ and $\Delta\alpha$. Even
for small matrices $A\in\mbbR^{m\times n}$ this can quickly become a problem.
However, it is possible to project the problem onto a Krylov subspace \cite{saad2003,
vandervorst2003} using a bidiagonal decomposition of $A$ \cite{golub1965, paige1982,
paige1982_2}. In each outer Krylov iteration, the projected version of the
Tikhonov-Morozov system \eqref{eq:tikmor} can then be solved using the NTM algorithm.

In this section we describe how this algorithm works using a number of heuristic
choices and apply it to different test problems. Roughly speaking, each iteration of the method will consist of the following steps:
\begin{itemize}
		\item Expand the bidiagonal decomposition of $A$.
		\item Choose an initial point for the NTM method on the projected equations.
		\item Calculate a number of NTM iterations on the projected equations.
		\item Check the convergence.
\end{itemize}

%%%%%%%%%%%%%%%%%%%%%%%%%%%%%%%%%%%%%%%%%%%%%%%%%%%%%%%%%%%%%%%%%%%%%%%%%%%%%%%%

\subsection{Bidiagonal decomposition}
\begin{theorem}[Bidiagonal decomposition]\label{thm:bidiag}
		If $A\in\mbbR^{m\times n}$ with $m\geq n$, then there exist orthonormal matrices
		\[
				U = (u_1, u_2, \ldots, u_m)\in\mbbR^{m\times m}\quad\text{and}\quad
						V = (v_1, v_2, \ldots, v_n)\in\mbbR^{n\times n}
		\]
		and a lower bidiagonal matrix
		\[
				B=\begin{pmatrix}\mu_1\\ \nu_2 & \mu_2\\  & \nu_3 & \ddots\\  &  &
						\ddots & \mu_{n}\\ 
						&  &  & \nu_{n+1}\end{pmatrix}\in\mathbb{R}^{(n+1)\times n},
		\]
		such that
		\[
				A = U\begin{pmatrix}B\\0\end{pmatrix}V^T.
		\]
\end{theorem}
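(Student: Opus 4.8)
The plan is to give a constructive proof: build $U$ and $V$ as finite products of Householder reflections that reduce $A$ to lower bidiagonal form --- the classical Golub--Kahan reduction \cite{golub1965}. The building block is the elementary fact that for any $z\in\mbbR^p$ and any index $j$ there is an orthogonal $H\in\mbbR^{p\times p}$ --- a suitably embedded Householder reflection $H=I-2ww^T/(w^Tw)$ --- that acts as the identity on the first $j-1$ coordinates and satisfies $(Hz)_i=0$ for $i>j$. Multiplying $A$ on the right by such an $H$ of size $n$ annihilates a chosen tail of a row; multiplying on the left by such an $H$ of size $m$ annihilates a chosen tail of a column.

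I would then proceed by induction on a stage counter $k=1,\ldots,n$, maintaining the invariant that after stage $k$ there are orthogonal $U_k$ (a product of left Householder reflections) and $V_k$ (a product of right Householder reflections) for which the first $k$ rows and the first $k$ columns of $U_k^T A V_k$ already sit in the target pattern: a $\mu_i$ on the diagonal, a $\nu_{i+1}$ directly below it, zeros elsewhere in those rows and columns. Stage $k+1$ then consists of one right reflection, supported on columns $k+1,\ldots,n$, that clears the part of row $k+1$ to the right of the diagonal, followed by one left reflection, supported on rows $k+2,\ldots,m$, that clears the part of column $k+1$ below the first subdiagonal. After stage $n$ one has $U_n^T A V_n=\begin{pmatrix}B\\0\end{pmatrix}$ with $B\in\mbbR^{(n+1)\times n}$ lower bidiagonal; setting $U=U_n$ and $V=V_n$, both orthogonal as products of Householder reflections, gives $A=U\begin{pmatrix}B\\0\end{pmatrix}V^T$. (If $m=n$ the same construction applies with $\nu_{n+1}=0$.)

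The one genuine verification --- and hence the main obstacle --- is checking that the invariant really is preserved: that the new right reflection, being the identity on columns $1,\ldots,k$, leaves the finished columns alone and (since the finished rows have no entries in columns $k+1,\ldots,n$) leaves the finished rows alone too; and symmetrically that the new left reflection, being the identity on rows $1,\ldots,k+1$, disturbs neither the finished rows nor the finished columns (whose entries in rows $\ge k+2$ are already zero). This amounts to confirming that the supports of the successive reflections are nested in the correct direction; it is routine, but the place where an off-by-one slip would do real damage.

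As a remark, the bare existence statement also follows in one line from the singular value decomposition $A=\hat U\Sigma\hat V^T$, since the rectangular diagonal matrix $\Sigma$ already has the form $\begin{pmatrix}B\\0\end{pmatrix}$ with $B$ trivially lower bidiagonal. That proof, however, throws away the Krylov-subspace structure of the columns of $U$ and $V$ that the projection method of the next section relies on, so the constructive reduction above --- or, equivalently, the Golub--Kahan--Lanczos recurrence seeded by $b$, at the price of dealing with possible breakdowns --- is the version actually wanted here.
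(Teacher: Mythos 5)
Your proof is correct, and it is essentially the paper's approach: the paper gives no argument of its own beyond citing Golub and Kahan \cite{golub1965}, and the Householder-based reduction to lower bidiagonal form that you describe (right reflection clearing a row, then left reflection clearing a column, with nested supports preserving the finished rows and columns) is precisely the construction in that reference. Your remarks on the $m=n$ edge case and on why the SVD shortcut is unsatisfactory here are sensible but not needed for the statement itself.
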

\begin{proof}
		This was proven by Golub and Kahan in \cite{golub1965}.
\end{proof}

Starting from a given unit vector $u_1\in\mathbb{R}^m$ it is possible to
generate the columns of $U$, $V$ and $B$ recursively using the Bidiag1 procedure
proposed by Paige and Saunders \cite{paige1982_2, paige1982}, see \hypref{algorithm}
{alg:bidiag1}. Here, the reorthogonalization is added for numerical stability.
Note that this bidiagonal decomposition is the basis for the LSQR algorithm and
that after $k$ steps of Bidiag1 starting with the initial vector $u_1 = b/\left\|b\right\|$
we have matrices $V_k\in\mbbR^{n\times k}$ and $U_{k + 1}\in\mbbR^{m\times(k + 1)}$
with orthonormal columns and a lower bidiagonal matrix $B_{k + 1, k}\in\mbbR^{(k + 1)\times k}$
that satisfy
\begin{equation}\label{eq:bdrel}
		AV_k = U_{k + 1}B_{k + 1, k}
\end{equation}

\begin{algorithm}
		\caption{bidiag1}\label{alg:bidiag1}
		\begin{algorithmic}[1]
				\State Choose initial unit vector $u_1$ (typically $b/\left\|b\right\|$).
				\State Set $\nu_1v_0 = \mu_{n + 1}v_{n = 1} = 0$.
				\For{$k = 1, \ldots,$ n}
						\State $r_k = A^Tu_k - \nu_kv_{k-1}$
						\State Reorthogonalize $r_k$ with respect to the previous columns of $V$.
						\State $\mu_k = \left\|r_k\right\|$ and $v_k = r_k/\mu_k$.
						\State $p_k = Av_k - \mu_ku_k$
						\State Reorthogonalize $p_k$ with respect to the previous columns of $U$.
						\State $\nu_{k+1} = \left\|p_k\right\|$ and $u_{k+1} = p_k/\nu_{k+1}$.
				\EndFor
		\end{algorithmic}
\end{algorithm}

In order to solve the Tikhonov-Morozov system \eqref{eq:tikmor}, we will calculate
a series of iterations in the Krylov subspace spanned by the columns of $V$:
\[
		x_k\in\spn{V_k} = \mc{K}_k(A^TA, A^Tb).
\]
This means that $x_k = V_ky_k$ for some $y_k\in\mbbR^k$ and using \eqref{eq:bdrel},
the orthonormality of the columns of $U$ and $V$ and the fact that $u_1 = b/\left\|
b\right\|$ it is possible to show that
\begin{equation}\label{eq:ptik}
		\min_{x_k\in\spn{V_k}}\left\|Ax_k - b\right\|^2 + \alpha\left\|x_k\right\|^2 =
				\min_{y_k\in\mbbR^n}\left\|B_{k + 1, k}y_k - c_k\right\|^2 + \alpha\left\|y_k\right\|^2
\end{equation}
and
\begin{equation}\label{eq:pmor}
		\left\|Ax_k - b\right\| = \left\|B_{k + 1, k}y_k - c_k\right\|,
\end{equation}
for $c_k = (\left\|b\right\|, 0, \ldots, 0)^T\in\mbbR^{k + 1}$. We therefore
set $x_k = V_ky_k$ and solve the following projected version of \eqref{eq:tikmor}:
\begin{equation}\label{eq:ptikmor}
		\begin{aligned}		
				\left\{\begin{aligned}
						\wt{F}_1(y_k, \alpha_k) &= (B_{k + 1, k}^TB_{k + 1, k} + \alpha I_k)y_k -
								B_{k + 1, k}^Tc_k\\
						\wt{F}_2(xy_k \alpha_k) &= \frac{1}{2}\left(B_{k + 1, k}y_k - c\right)^T
								\left(B_{k + 1, k}y_k - c_k\right) - \frac{1}{2}\varepsilon^2
				\end{aligned}\right.
		\end{aligned}
\end{equation}
Similarly to the to original non-linear system, $\wt{F}_1$ are the normal equations
corresponding to the projected Tikhonov problem \eqref{eq:ptik} and $\wt{F}_2$
corresponds to the projected discrepancy principle \eqref{eq:pmor}.

%%%%%%%%%%%%%%%%%%%%%%%%%%%%%%%%%%%%%%%%%%%%%%%%%%%%%%%%%%%%%%%%%%%%%%%%%%%%%%%%

\subsection{Inner NTM iterations}
In each outer Krylov iteration (numbered with $k$) \eqref{eq:ptikmor} needs to
be solved, which we will do using the NTM method. This means that in the inner
Newton iterations (numbered with $l$), the following Jacobian system needs to be
solved:
\begin{equation}\label{eq:pjacsys}
		\begin{aligned}
				&\begin{pmatrix} B_{k + 1, k}^TB_{k + 1, k} + \alpha_{k, l - 1}I_k & y_{k, l - 1}\\
						\frac{1}{\alpha_{k, l - 1}}\left(B_{k + 1, k}y_{k, l - 1} - c_k\right)^TB_{k + 1, k} & 
						0\end{pmatrix}\begin{pmatrix}\Delta y_{k, l} \\ \Delta\alpha_{k, l}\end{pmatrix}\\
				&\qquad\quad=-\begin{pmatrix} \left(B_{k + 1, k}^TB_{k + 1, k} + \alpha_{k, l - 1}I_k\right)y_{k, l - 1}
						- B_{k + 1, k}^Tc_k \\ \frac{1}{2\alpha_{k, l - 1}}\left(B_{k + 1, k}y_{k, l - 1} - c_k\right)^T
						\left(B_{k + 1, k}y_{k, l - 1} - c_k\right) - \frac{1}{2\alpha_{k, l - 1}}\varepsilon^2\end{pmatrix}.
		\end{aligned}
\end{equation}
Note that the matrix $B_{k + 1, k}^TB_{k + 1, k}$ has size $k\times k$. This
means that as long as the number of outer iterations remains small -- which
corresponds to the size of the constructed Krylov basis -- calculating $\left\|
D_{l - 1}^{-1}\right\|$ and solving the projected Jacobian system \eqref{eq:pjacsys}
of size $(k + 1)\times(k + 1)$ can be done efficiently. A full overview of the
method can be found in \hypref{algorithm}{alg:pntm} and below we discuss some
of the steps.

As a starting point for the original NTM method, we used the solution to the
Tikhonov normal equations $F_1(x_0, \alpha_0) = 0$ for a chosen $\alpha_0$. Now,
in each outer Krylov iteration, we will use the current best estimate for the
regularization parameter, i.e. $\alpha_{k, 0} = \alpha_{k - 1}$ and solve the
projected Tikhonov normal equations $\wt{F}_1(y_{k, 0}, \alpha_{k, 0})$. This
$k \times k$ linear system can be solved quickly as long as the number of Krylov
iterations is small and its solution can be used to initialize the inner Newton
iterations.

Another important question is how many inner Newton iterations should be performed
before the Krylov subspace is expanded. If, on the one hand, the Krylov subspace
is too small to contain the solution $x$ of the inverse problem (or a good approximation
of it), then the Newton iterations cannot converge. Therefore we would like the
number of inner iterations to be small. If, on the other hand, the Krylov subspace
is large enough to contain the solution, we don't want to keep expanding it. The
maximum number of inner Newton iterations should therefore be large enough for
them to converge. This is why we initially limit the number of inner Newton
iterations. However, the moment that the residual
of the solution becomes less than the discrepancy level $\varepsilon$, we will take
a much larger number. This corresponds to \hypref{lines}{alg:pntm:init1}--\ref{alg:pntm:init2}
of \hypref{algorithm}{alg:pntm}.

Finally, we don't change the stopping criterion for the inner Newton iterations,
\hypref{algorithm}{alg:pntm} \hypref{line}{alg:pntm:stop1}. However, because
we are now working with the the projected system, $\wt{F}$ may be solved accurately
before the original system $F$ is. We therefore don't stop the outer Krylov iterations
until the value for the regularization parameter $\alpha_{k}$ stagnates as well,
\hypref{algorithm}{alg:pntm} \hypref{line}{alg:pntm:stop2}. The necessity for this
will become clear in the numerical experiments, where we will see that this corresponds
to finding a solution $x_k$ that satisfied the discrepancy principle, but not the Tikhonov
normal equations.

\begin{algorithm}
		\caption{Projected Newton on the Tikhonov-Morozov system (PNTM)}\label{alg:pntm}
		\begin{algorithmic}[1]
				\State Choose initial $\alpha_0 > 0$.
				\State Set FLAG $= 0$.
				\For{$k = 1, \ldots,$ outeriter}
						\State Expand $U_{k + 1}$, $B_{k + 1, k}$ and $V_k$ using Bidiag1 \eqref{alg:bidiag1}.
						\State Set $\alpha_{k, 0} = \alpha_{k - 1}$ and solve $\wt{F}_1(y_{k, 0}, \alpha_{k, 0})$ for $y_{k, 0}$.
						\If{$\left\|B_{k + 1, k}y_{k, 0} - c_k\right\| > \epsilon$}\label{alg:pntm:init1}
								\State inneriter $= \min\left\{k, 10\right\}$
						\Else
								\State inneriter $= 10000$\label{alg:pntm:inneriter}
						\EndIf\label{alg:pntm:init2}
						\For{$l = 1, \ldots,$ inneriter}
								\State Solve the projected Jacobian system \eqref{eq:pjacsys} for $\Delta y_{k, l}$ and $\Delta\alpha_{k, l}$.
								\State Calculate $\left\|D^{-1}_{l - 1}\right\|$.
								\State Calculate the step size $\gamma$ using \hypref{corollary}{cor:gamma1} or \ref{cor:gamma2}.
								\State $y_{k, l} = y_{k, l - 1} + \gamma\Delta y_{k, l}$ and $\alpha_{k, l} = \alpha_{k, l - 1} + \gamma\Delta\alpha_{k, l}$.
								\If{$\left\|\wt{F}(y_{k, l}, \alpha_{k, l})\right\| <$ tol}\label{alg:pntm:stop1}
										\State FLAG $= 1$
										\Break
								\EndIf
						\EndFor
						\State $x_k = V_ky_{k, l}$ and $\alpha_k = \alpha_{k, l}$.
						\If{FLAG $= 1$ \textbf{and} $\left|\alpha_k - \alpha_{k - 1}\right|/\alpha_{k - 1} <$ tol}\label{alg:pntm:stop2}
								\Break
						\EndIf
				\EndFor
		\end{algorithmic}
\end{algorithm}

%%%%%%%%%%%%%%%%%%%%%%%%%%%%%%%%%%%%%%%%%%%%%%%%%%%%%%%%%%%%%%%%%%%%%%%%%%%%%%%%

\section{Reference methods}\label{sec:refmethods}
In this section we briefly discuss two methods which we compare the PNTM method to.
The first method iteratively solves the Tikhonov problem and also uses an iterative
update scheme for the regularization parameter based on the discrepancy principle.
The second method does not solve the Tikhonov problem, but combines an early stopping
criterion with a right preconditioner in order to include prior knowledge and
regularization.

%%%%%%%%%%%%%%%%%%%%%%%%%%%%%%%%%%%%%%%%%%%%%%%%%%%%%%%%%%%%%%%%%%%%%%%%%%%%%%%%

\subsection{Generalized bidiagonal-Tikhonov}
In \cite{gazzola2014_1, gazzola2014_2, gazzola2014_3} a generalized Arnoldi-Tikhonov
method (GAT) was introduced that iteratively solves the Tikhonov problem \eqref{eq:tikhonov}
using a Krylov subspace method based on the Arnoldi decomposition of the matrix $A$.
Simultaneously, after each Krylov iteration, the regularization parameter is updated
in order to approximate the value for which the discrepancy is equal to $\varepsilon$.
This is done using one step of the secant method to find the intersection of the
discrepancy curve with the tolerance for the discrepancy principle, see \hypref{figure}
{fig:curves}, but in the current Krylov subspace. Because the method is based on
the Arnoldi decomposition, the method is connected to the GMRES algorithm and it
only works for square matrices. However, by replacing the Arnoldi decomposition
with the bidiagonal decomposition we used in the previous section the method can
be adapted to non-square matrices.

The update for the regularization parameter is done based on the regularized and
the non-regularized residual. Let, in the $k$th iteration, $z_k$ be the solution
without regularization -- i.e. $\alpha = 0$ -- and $y_k$ the solution with the
current best regularization parameter -- i.e. $\alpha = \alpha_{k - 1}$. If $r(z_k)$
and $r(y_k)$ are the corresponding residuals, then the regularization parameter
is updates using
\begin{equation}\label{eq:aup}
		\alpha_k = \left|\frac{\varepsilon - r(z_k)}{r(y_k) - r(z_k)}\right|\alpha_{k - 1}.
\end{equation}
A brief sketch of this method is given is \hypref{algorithm}{alg:gbit}, where
we use the same stopping criterion as for PNTM, but for more information we
refer to \cite{gazzola2014_1, gazzola2014_2, gazzola2014_3}. Note that in the
original GAT method, the non-regularized iterates $z_k$ are equivalent to the
GMRES iterations for the solution of $Ax = b$. Now, because the Arnoldi
decomposition is replaced with the bidiagonal decomposition, they are equivalent
to the LSQR iterations for the solution of $Ax = b$.

\begin{algorithm}
		\caption{Generalized bidiagonal Tikhonv (GBiT)}\label{alg:gbit}
		\begin{algorithmic}[1]
				\State Choose initial $\alpha_0 > 0$.
				\For{k = 1, \ldots, maxiter}
						\State Expand $U_{k + 1}$, $B_{k + 1, k}$ and $V_k$ using Bidiag1 \eqref{alg:bidiag1}.
						\State Solve $\wt{F}_1(z_k, 0) = 0$ for $z_z$.
						\State Solve $\wt{F}_1(y_k, \alpha_{k - 1}) = 0$ for $y_k$.
						\State Calculate $\alpha_k$ using \eqref{eq:aup}.
						\If{$\left\|\wt{F}(y_k, \alpha_k)\right\| <$ tol \textbf{and} $\left|\alpha_k - \alpha_{k - 1}\right|/\alpha_{k - 1} <$ tol}
								\Break
						\EndIf
				\EndFor
		\end{algorithmic}
\end{algorithm}

%%%%%%%%%%%%%%%%%%%%%%%%%%%%%%%%%%%%%%%%%%%%%%%%%%%%%%%%%%%%%%%%%%%%%%%%%%%%%%%%

\subsection{General form Tikhonov and priorconditioning}
In its general form, the Tikhonov problem \eqref{eq:tikhonov} is written as
\begin{equation}\label{eq:gentikhonov}
		x_\alpha = \argmin_{x\in\mbbR^n}\left\|Ax - b\right\|^2 + \alpha\left\|L(x - x_0)\right\|^2,
\end{equation}
with $x_0\in\mbbR^n$ an initial estimate and $L\in\mbbR^{p\times n}$ a regularization
matrix, both chosen to incorporate prior knowledge or to place specific constraints
on the solution \cite{gazzola2014_1, hansen2010}. If $L$ is a square invertible matrix, then the problem can
be written in the standard form 
\begin{equation}\label{eq:tranftikhonov}
			z_\alpha = \argmin{z\in\mbbR^n}\left\|\ol{A}z - r_0\right\|^2 + \alpha\left\|z\right\|^2,
\end{equation}
by using the transformation
\begin{equation}\label{eq:stdtransf}
		z = L(x - x_0), \quad\ol{A} = AL^{-1}, \quad r_0 = b - Ax_0.
\end{equation}
When $L$ is not square invertible, some form of pseudoinverse has to be used,
but the reformulation of the problem remains the same \cite{hansen2010}.

After solving \eqref{eq:tranftikhonov}, the solution can be found as
\[
		x = x_0 + L^{-1}z.
\]
Instead of solving $Ax = b$, an alternative regularization method called priorconditionning
is to solve
\[
		\left\{\begin{aligned}
				&AL^{-1}z = b - Ax_0\\
				&x = x_0 + L^{-1}z
		\end{aligned}\right.
\]
Here, the matrix $L$ is can be seen as a right preconditioner. Its functions is,
however, not to improve the convergence of the iterative method, but to incorporate
regularization and prior knowledge into the solution \cite{calvetti2015}. This priorconditionned
linear system can now be solved with CGLS combined with an early stopping criterion
based on the discrepancy principle. Note that this method will find a solution in the
same Krylov subspace as PNTM, but that PNTM selects another element of this space
due to the presence of the regularization term.

%%%%%%%%%%%%%%%%%%%%%%%%%%%%%%%%%%%%%%%%%%%%%%%%%%%%%%%%%%%%%%%%%%%%%%%%%%%%%%%%

\section{Numerical Experiments II}\label{sec:numexp2}

%%%%%%%%%%%%%%%%%%%%%%%%%%%%%%%%%%%%%%%%%%%%%%%%%%%%%%%%%%%%%%%%%%%%%%%%%%%%%%%%

\subsection{Large random matrix problem}
As a first numerical experiment, we repeat the random matrix experiment from
\hypref{section}{sec:numexp1}. The only thing we change is
the size of the matrices: $21000\times 15000$. The results are shown in \hypref{figure}
{fig:rml} and \hypref{table}{tab:rml}, where we used $tol = 1\snot{-3}$ for the 
stopping criterion. Similarly as with the smaller experiment,
there is little difference between the different runs when it comes to the number
of iterations (outer and inner) or the optimal regularization parameter. As a
comparison, we also solved the problem with GBiT and see that while a similar
value for the regularization parameter is found, PNTM requires less Krylov
iterations in order to converge.

When we compare \hypref{figure}{fig:rms} and \hypref{figure}{fig:rml}, we see
that the behaviour of the method is quite different now. In the original NTM
method we started from a point on the discrepancy curve and stayed close to it
by limiting the step size. Now, with the PNTM method, we solve the problem in
Krylov subspaces of increasing size. This means that in the first few iterations,
we end up far away from the true discrepancy curve. At some point we have constructed
a Krylov subspace in which we can solve the projected system up to the discrepancy
principle, but as we observe, not necessarily the true Tikhonov normal equations.
At this point we increase the maximum number of inner iterations and we keep
performing outer Krylov iterations until the regularization parameter stagnates.

Whichever of the two corollaries we use to determine the step size produces similar results.
The main difference is the number of inner iterations required to solve the projected
system. Using \hypref{corollary}{cor:gamma2}, the method once again requires a significantly
lower number of Newton iterations to converge inside each of the Krylov subspaces.

\begin{figure}
		\centering
		\includegraphics[width = 0.49\linewidth]{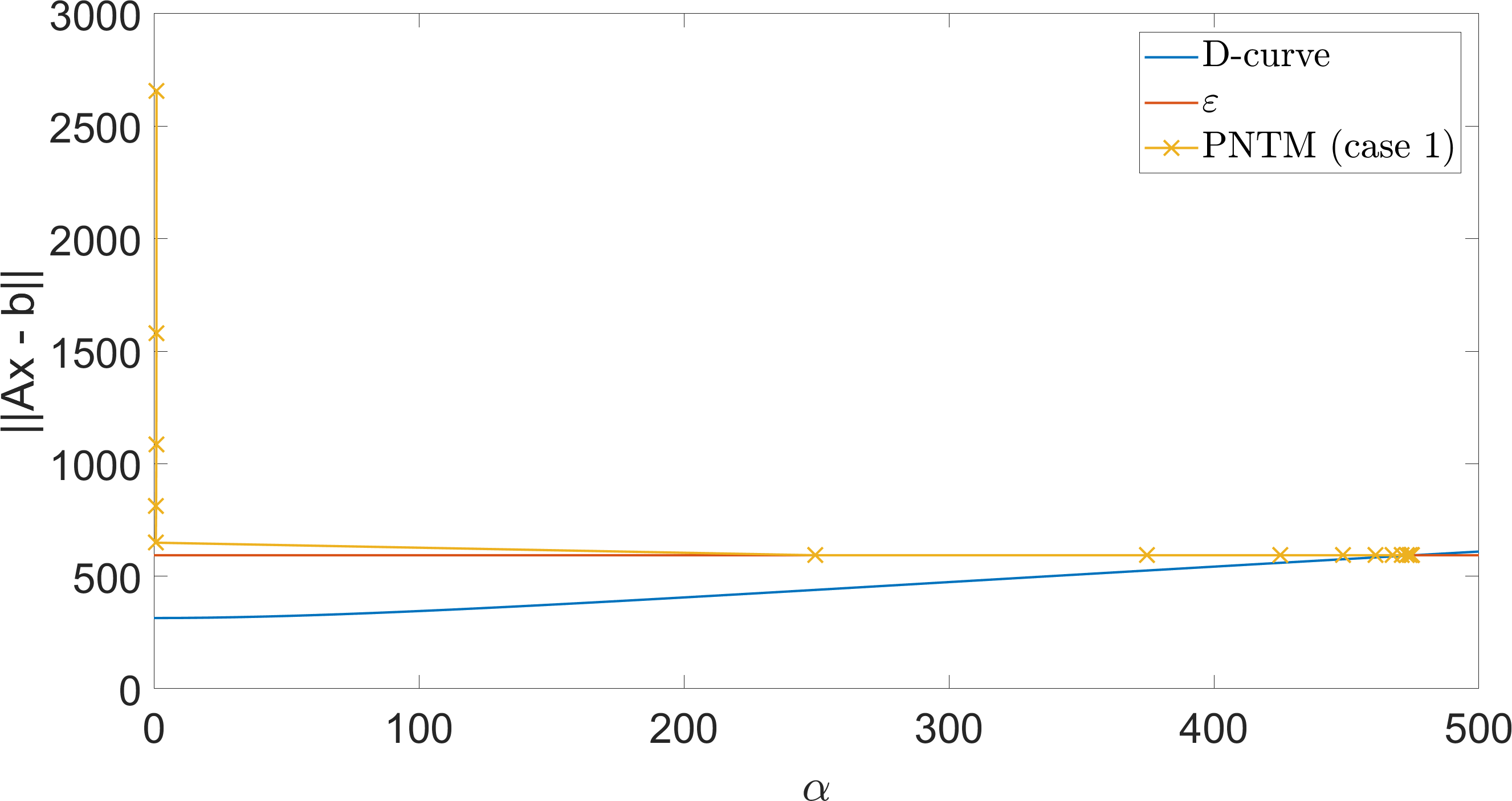}\hspace{2.5pt}
		\includegraphics[width = 0.49\linewidth]{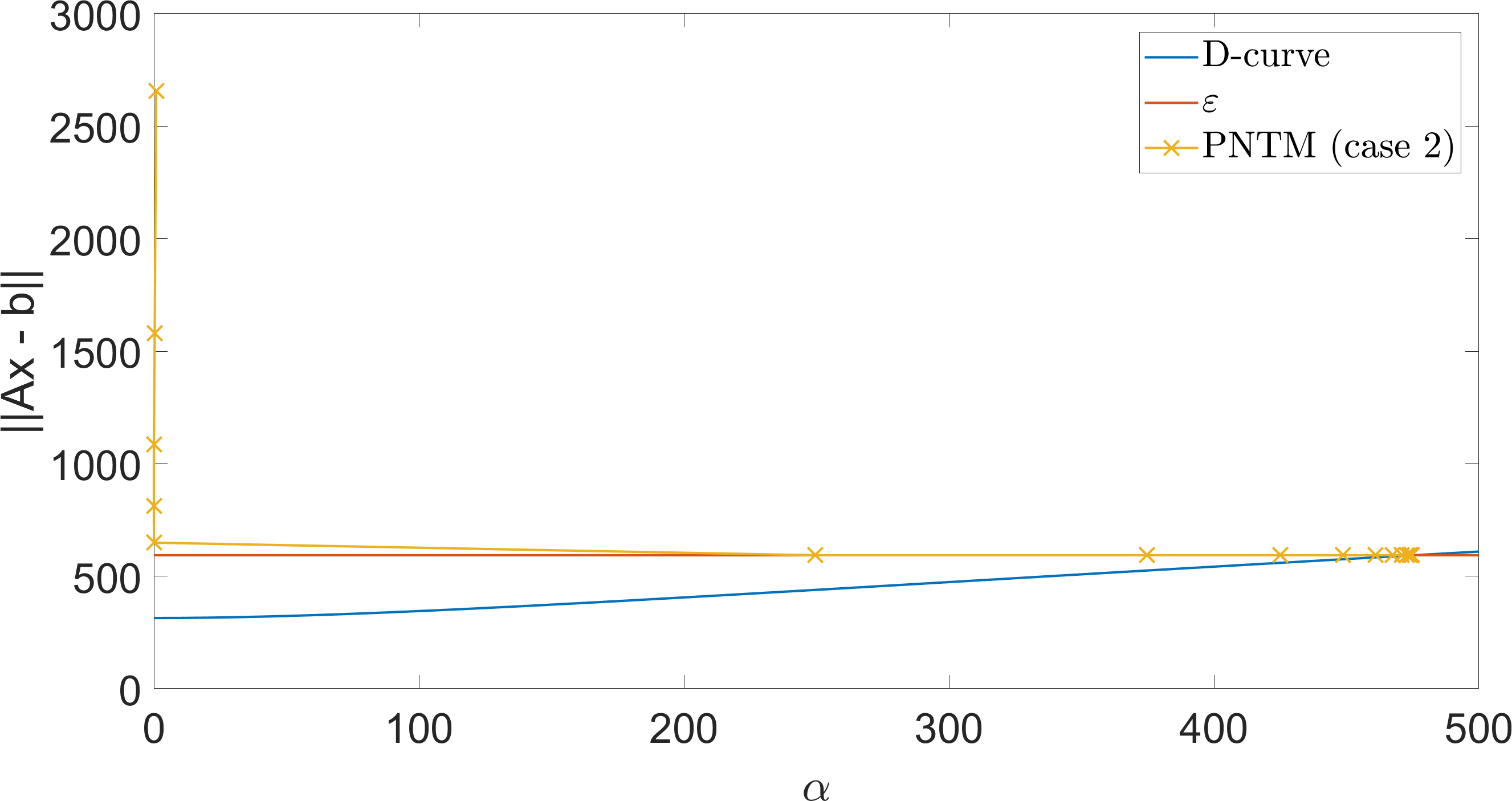}\\[2.5pt]
		\includegraphics[width = 0.49\linewidth]{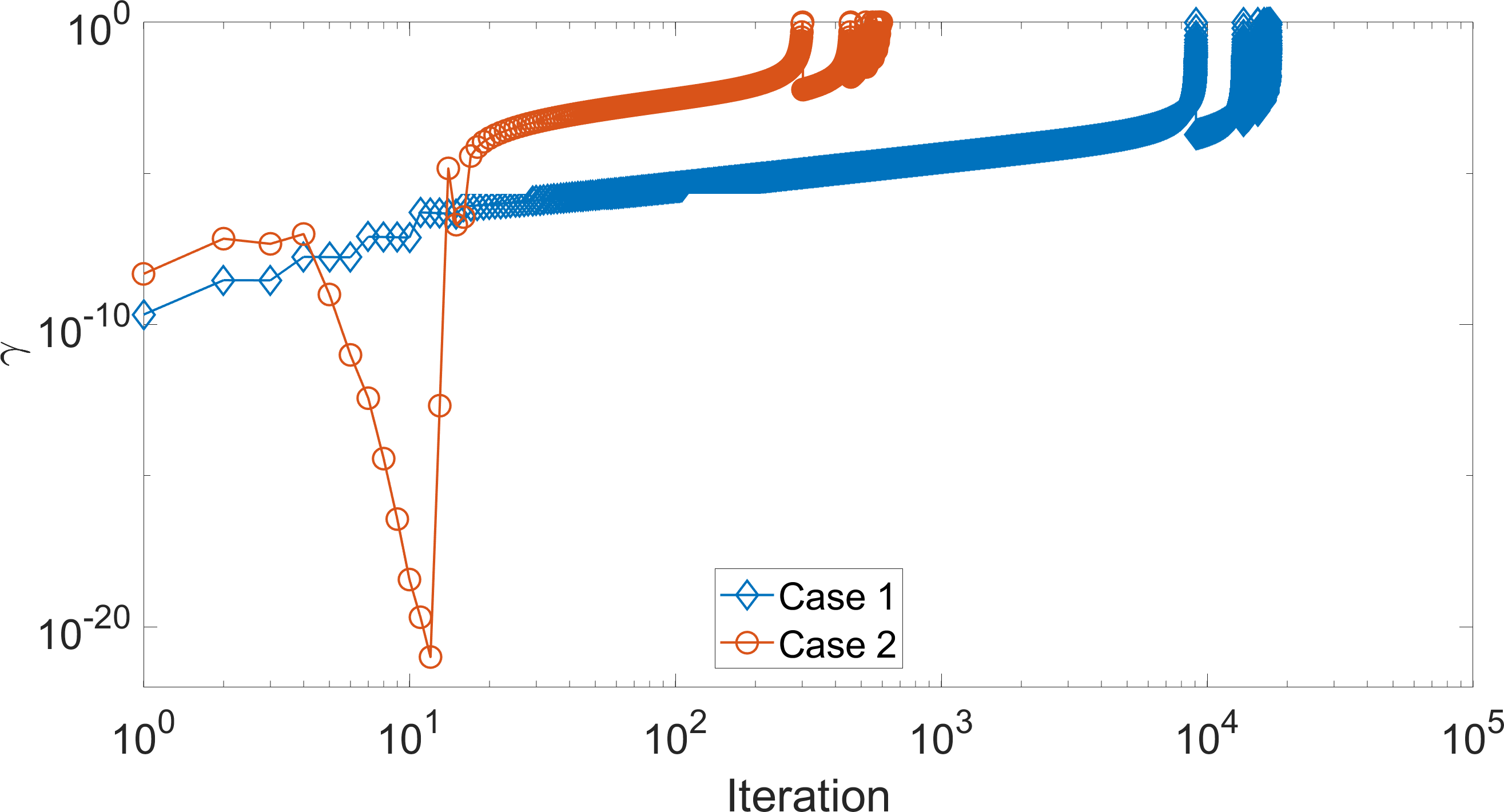}\hspace{2.5pt}
		\includegraphics[width = 0.49\linewidth]{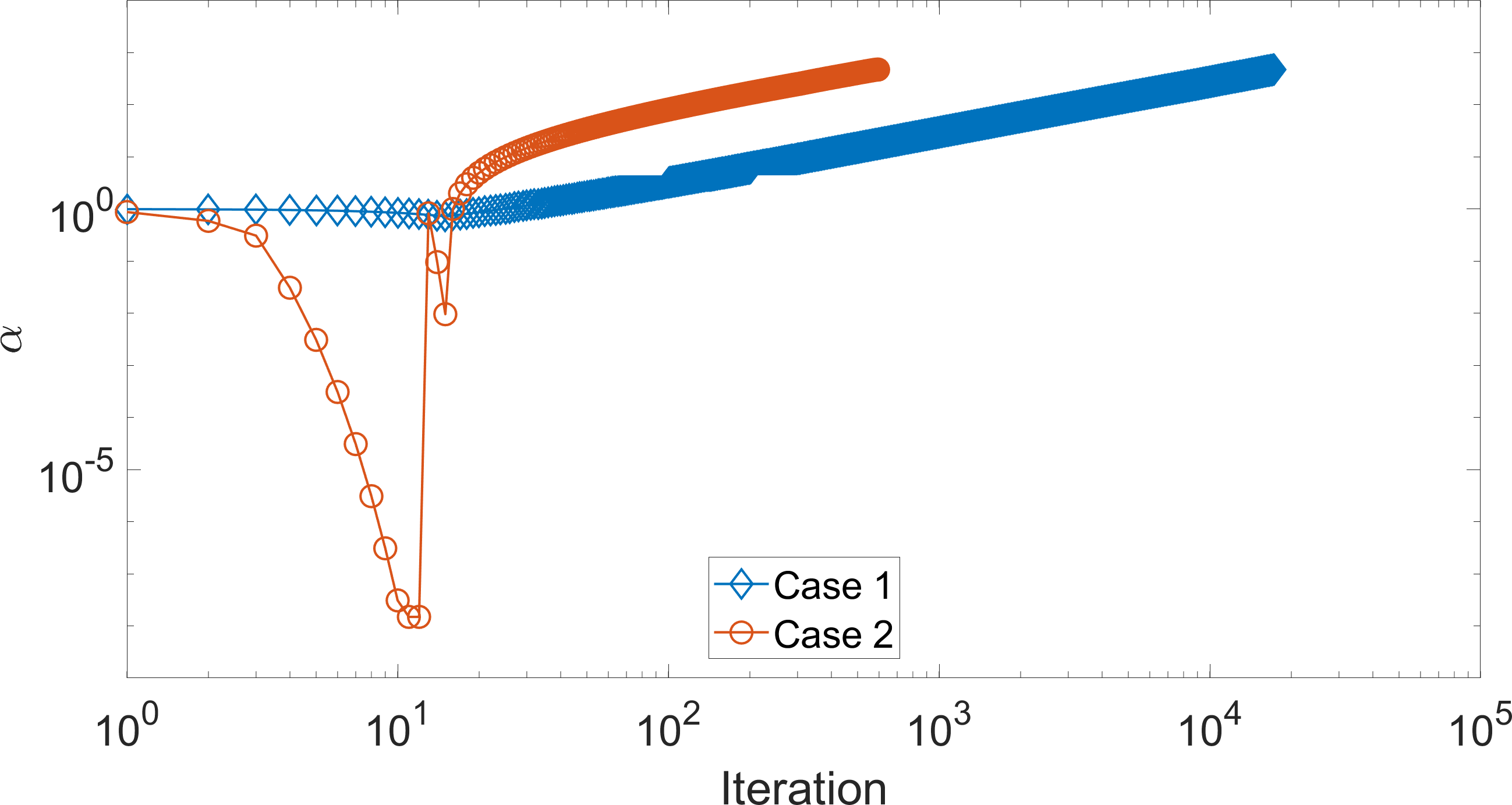}\\[2.5pt]
		\includegraphics[width = 0.49\linewidth]{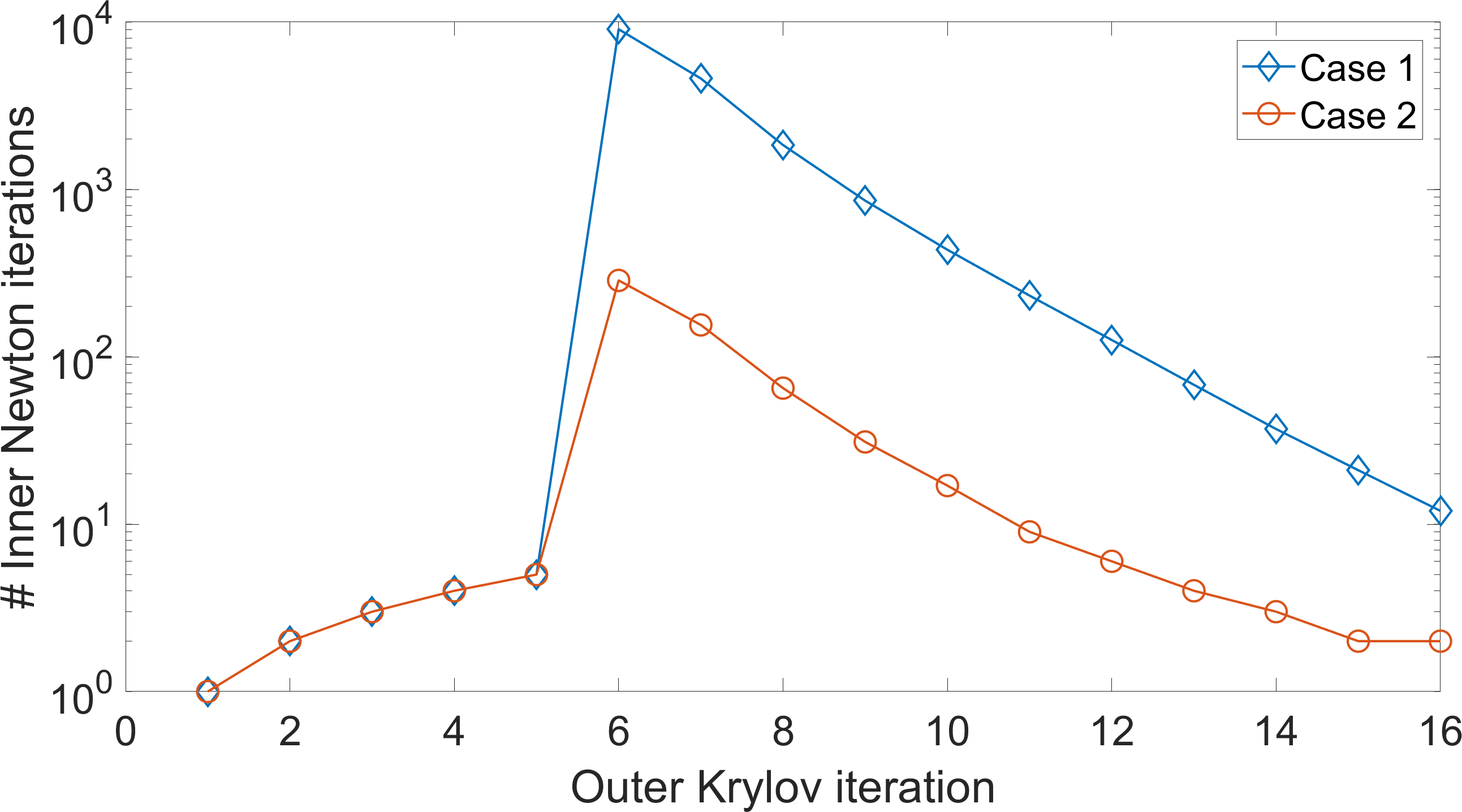}
		\caption{For each Newton iteration, we plot the point $(\alpha_k, \left\|Ax_k -
				b\right\|)$ to see where it lies with respect to the discrepancy curve. The top left
				figure corresponds to case 1, the top right figure to case 2.
				Middle left: the value of the step size used in each iteration. Middle right:
				the value of the regularization parameter in each iteration.
				Bottom: the number of inner Newton iterations per outer Krylov iteration.}
		\label{fig:rml}
\end{figure}

\begin{table}
		\centering
		\begin{tabular}{c||c|c|c}
				& \# Krylov iterations & \# Newton iterations & $\alpha$ \\ \hline\hline
				PNTM -- case 1 & $16$ ($< 1$) & $16772$ ($432$) & $469.0143$ ($5.98$) \\ \hline
				PNTM -- case 2 & $16$ ($< 1$) & $576$ ($14$)    & $469.0144$ ($5.98$) \\ \hline
				GBiT           & $32$ ($< 1$) & $\cdot$         & $469.3934$ ($5.97$)
		\end{tabular}
		\caption{Average number of iterations for the 1000 runs of the experiment and
				the standard deviation (rounded). The number of outer iterations corresponds
				to the dimension of the constructed Krylov subspace, whereas the number of
				inner iterations is the total number of Newton iterations during all the outer
				iterations. Because both methods converge to the same
				solution, the same value for $\alpha$ is found in each run, but for all the
				different random matrices its value turns out to be quite similar, hence
				the low standard deviation.}
		\label{tab:rml}
\end{table}

%%%%%%%%%%%%%%%%%%%%%%%%%%%%%%%%%%%%%%%%%%%%%%%%%%%%%%%%%%%%%%%%%%%%%%%%%%%%%%%%

\subsection{Computed tomography}
As a second numerical experiment, we consider x-ray computed tomography. Here,
the goal is to reconstruct the attenuation factor of an object based on the loss
of intensity in the x-rays after they passed through the object. Classically, the
reconstruction is done using analytical methods based on the Fourier and Randon
transformations \cite{mallat2009}. In the last decades interest has grown in algebraic reconstruction
methods due to their flexibility when it comes to incorporating prior knowledge and
handling limited data. Here, the problem is written as a linear system $Ax = b$, where
$x$ represents the attenuation of the object in each pixel, the right-hand side $b$
is related to the intensity measurements of the x-rays and $A$ is a projection matrix.
The precise structure of $A$ depends on the experimental set-up, but it is typically
very sparse. For more information we refer to \cite{joseph1982, hansen2010, siltanen2012}.
We also do not construct the matrix $A$ explicitly, but use the ASTRA toolbox
\cite{aarle2015, aarle2016} in order to calculate the matrix vector products on-the-fly
using their GPU implementation \cite{palenstijn2011}.

As a test image we take the modified Shepp--Logan phantom of size $512\times 512$
and take $720$ projection angles in $[0, \pi[$, which corresponds to a matrix $A$
of size $(720\cdot 512)\times(512\cdot 512)$. Similar to the previous experiments
we add $10\%$ noise to the exact right hand size (resulting here in $\varepsilon =
4.3513\snot{3}$), but we will only calculate the PNTM reconstruction using the larger
step size from \hypref{corollary}{cor:gamma2}. We also calculate the reconstruction
using GBiT and the simultaneous iterative reconstruction technique (SIRT) \cite{gregor2008}.
The latter is a widely used fixed point iteration method for tomographic reconstructions
based on the following recursion:
\[
		x_{k + 1} = x_k + CA^TR\left(b - Ax_k\right).
\]
Here, $R$ and $C$ are diagonal matrices whose elements are the inverse row and
column sums, i.e. $r_{ii} = 1/\sum_ia_{ij}$ and $c_{jj} = 1/\sum_ia_{ij}$. It
can also be shown that this algorithm converges to the solution of the following
weighted least squares problem:
\[
		x^* = \argmin_{x\in\mbbR^n}\left\|Ax- b\right\|_R^2
\]
Note that, on the one hand, just like PNTM or GBiT, each SIRT iteration requires
one multiplication with $A$ and one with $A^T$. On the other hand, it does not need
to construct and store a basis for the Krylov subspace, so it is
computationally less expensive and requires much less memory -- two main advantages 
of the method.

The reconstructions are shown in \hypref{figure}{fig:ct1}, with further details in
\hypref{Figure}{fig:ct2} and \hypref{table}{tab:ct}. Here, we used $tol = 1\snot{-3}$
for the PNTM and GBiT stopping criterion and stopped the SIRT iterations once the
residual was smaller than the discrepancy tolerance $\varepsilon$. Furthermore, because
the 2-norm is not always a good measure for how closely two images visually resemble
each other, we also consider the structural similarity index (SSIM)\cite{wang2004}.
For two images $x$ and $y$ and default values $C_1 = 0.01^2$ and $C_2 = 0.03^2$,
this index is given by:
\[
		SSIM(x, y) = \frac{\left(2\mu_x\mu_y + C_1\right)\left(2\sigma_{xy} + C_2
				\right)}{\left(\mu_x^2 + \mu_y^2 + C_1\right)\left(\sigma_x^2 +
				\sigma_y^2 + C_2\right)}.
\]
Here, $\mu_x$ and $\mu_y$ are the mean intensity of the images, $\sigma_x$ and
$\sigma_y$ their standard deviation and $\sigma_{xy}$ the covariance. This index
lies between $0$ and $1$ and the lower its value, the better the image $x$ resembles
the reference image $y$.

When we look at the results, we see that there is little difference between the
errors of the reconstructions, but that SIRT has a much larger SSIM. When looking
at the reconstructed images, we see see that this images is indeed smoother than
the others. Because SIRT is a stationary method, it also needs more iterations
than PNTM and GBiT, which are both Krylov methods. Similarly as with the previous
experiment, however, we see that GBiT needs almost twice as many Krylov iterations
as PNTM. When we look at \hypref{figure}{fig:ct2} we see that while the value
for the regularization parameter stagnates at a similar pace, PNTM more
quickly minimizes the value of $\wt{F}$.

\begin{figure}
		\hspace*{0.32\linewidth}\hspace*{1pt}
		\includegraphics[width = 0.32\linewidth]{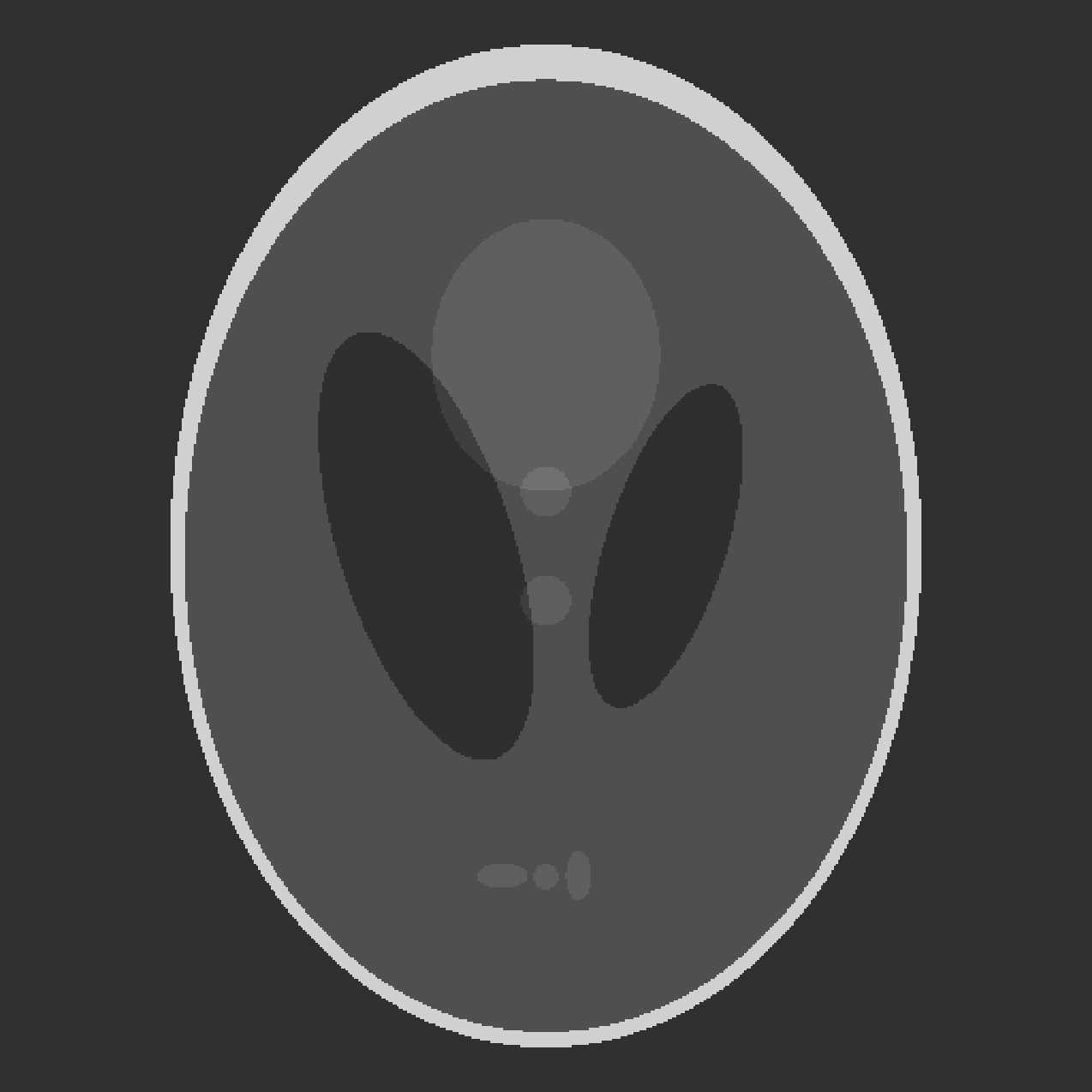}\hspace{1pt}
		\includegraphics[height = 0.32\linewidth]{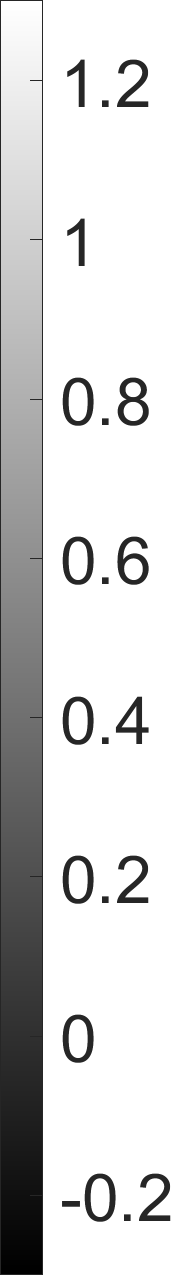}\\[2.5pt]
		\includegraphics[width = 0.32\linewidth]{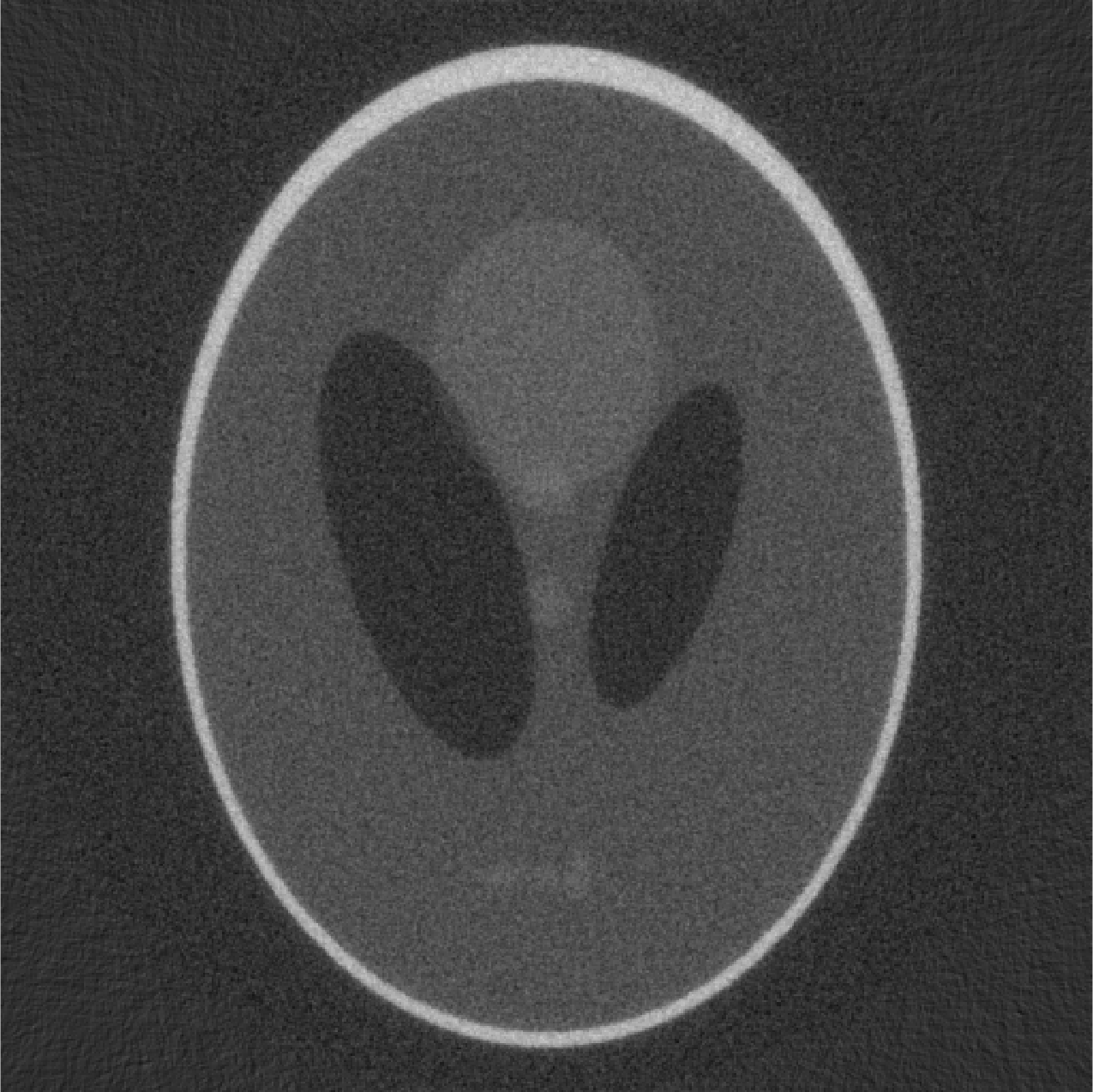}\hspace{1pt}
		\includegraphics[width = 0.32\linewidth]{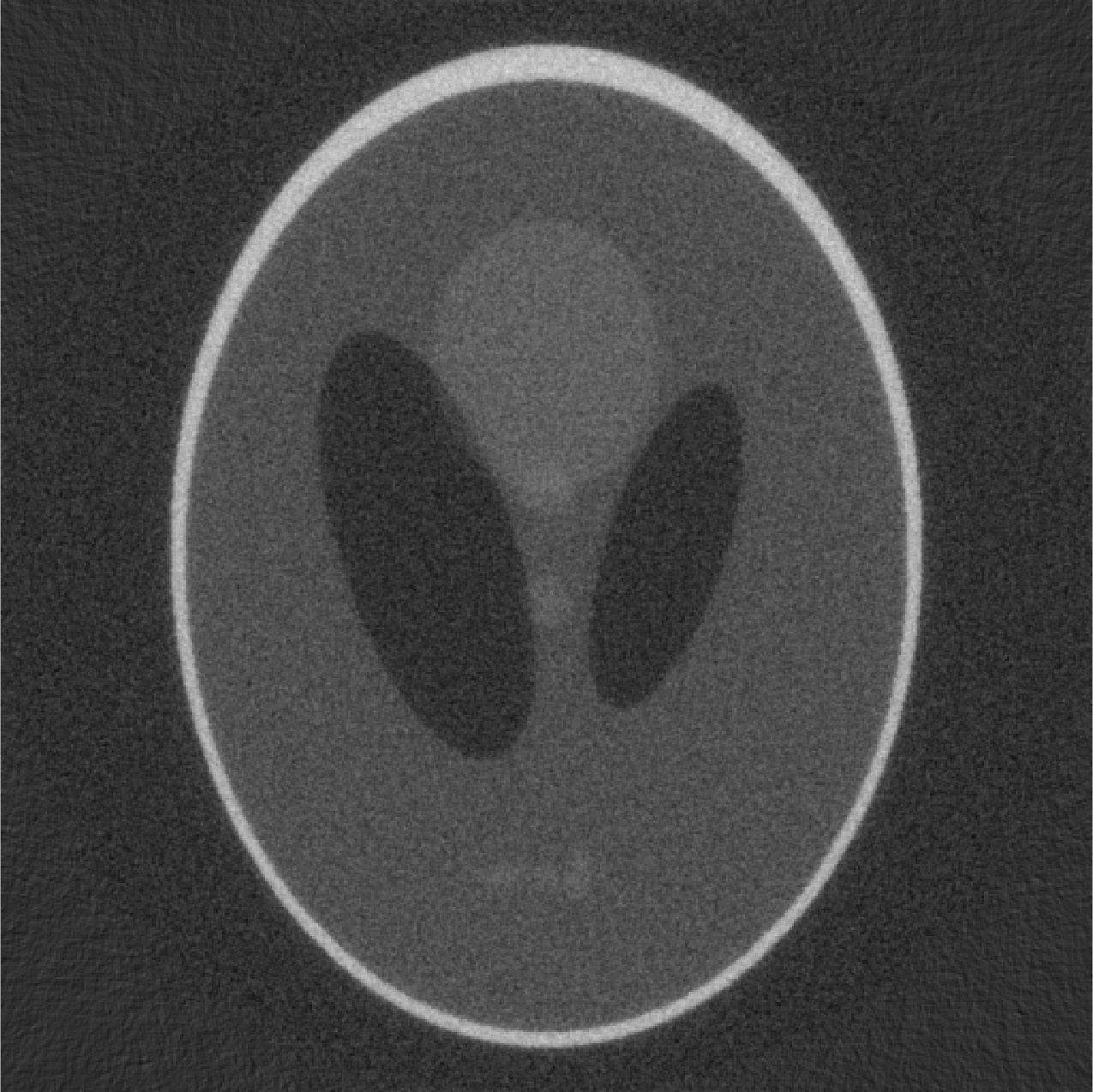}\hspace{1pt}
		\includegraphics[width = 0.32\linewidth]{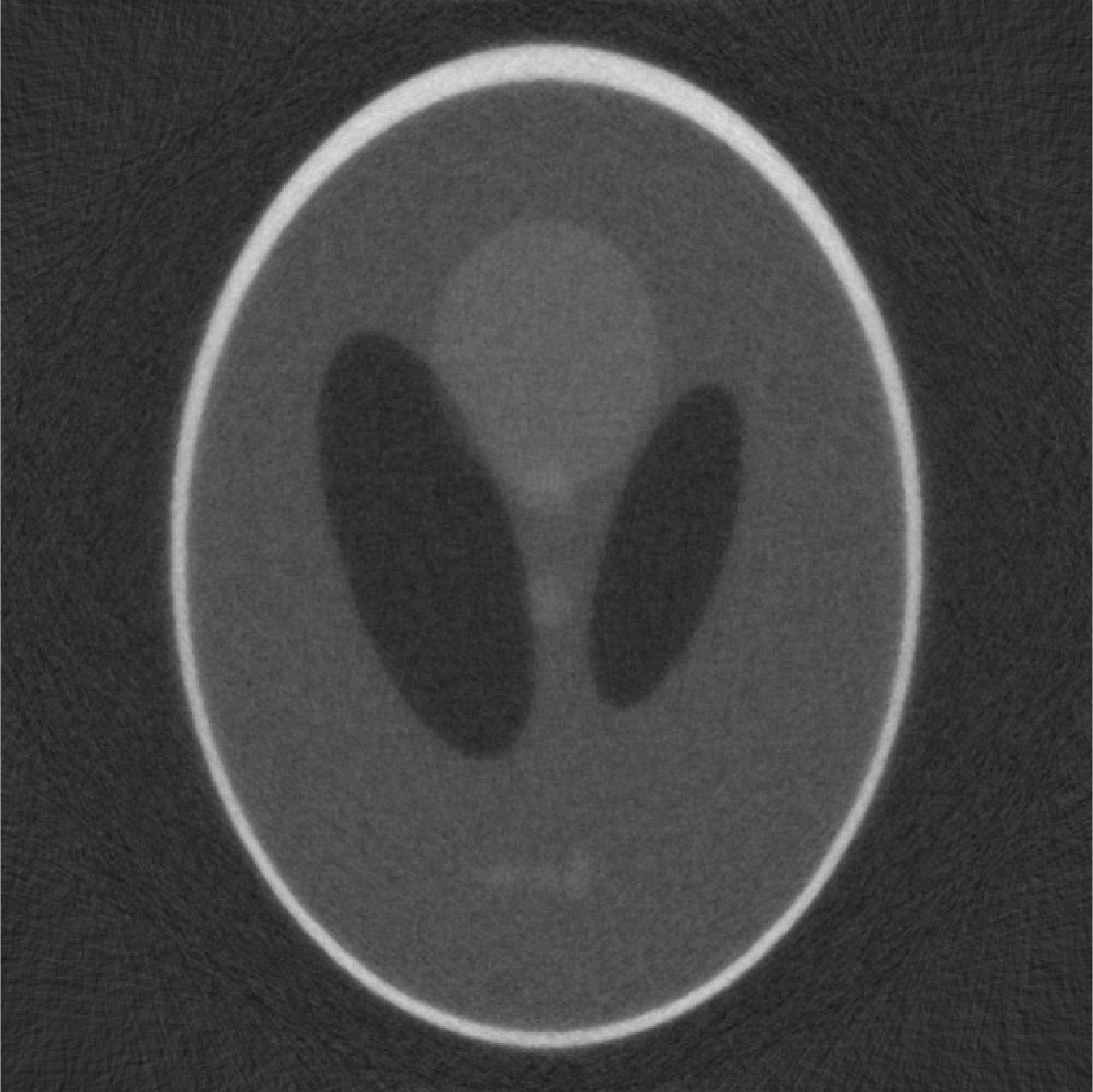}
		\caption{Top: original Shepp-Logan phantom with values in $[0, 1]$. Bottom:
				from left to right the PNTM, GBiT and SIRT reconstructions with values
				in $[-0.2074, 1.0889]$, $[-0.2071, 1.0899]$ and $[-0.1477, 1.1078]$
				respectively. Here, all images are shown on a colorscale $[-0.3, 1.3]$.}
		\label{fig:ct1}
\end{figure}

\begin{figure}
		\centering
		\includegraphics[width = 0.49\linewidth]{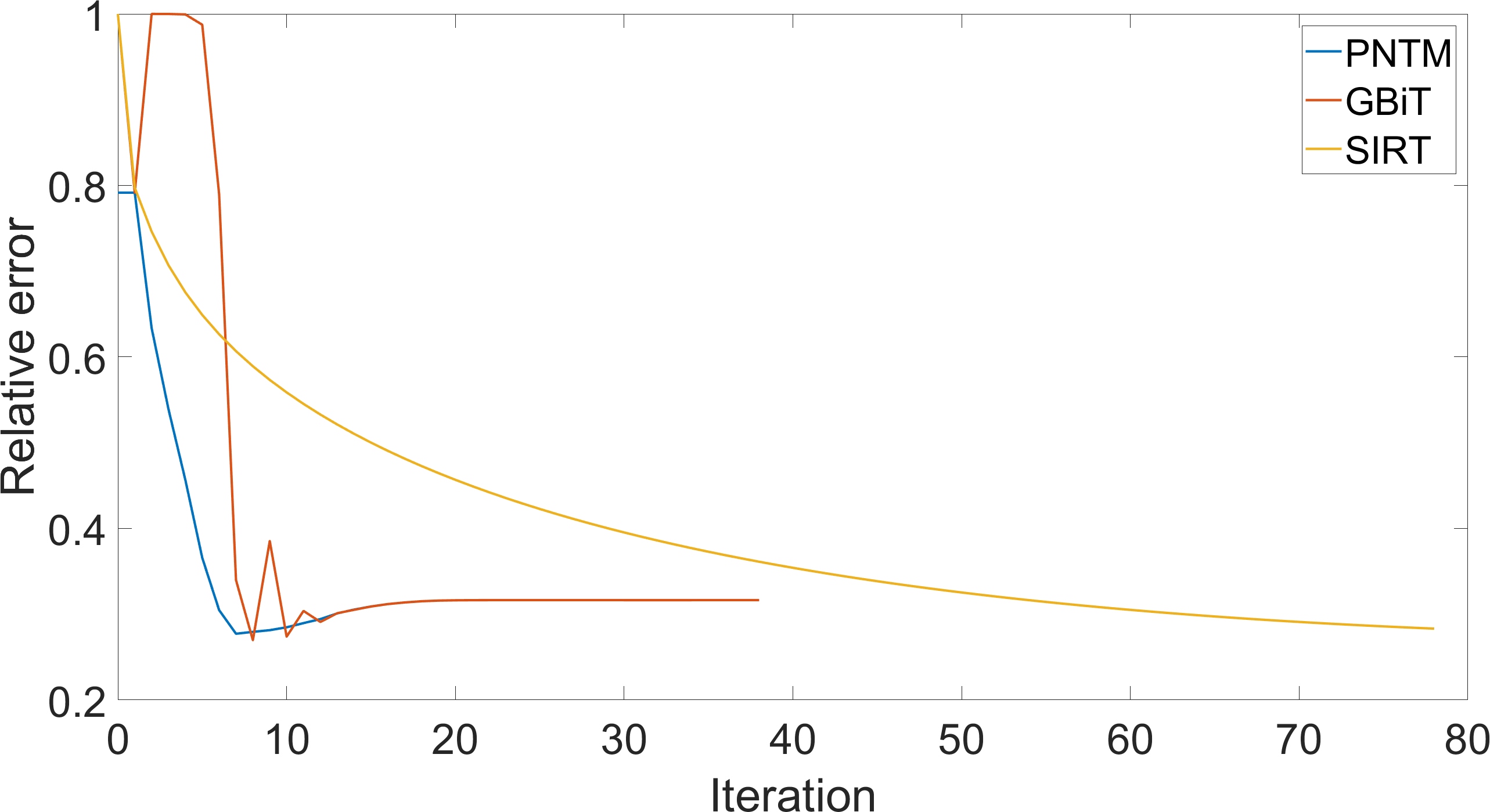}\\[2.5pt]
		\includegraphics[width = 0.49\linewidth]{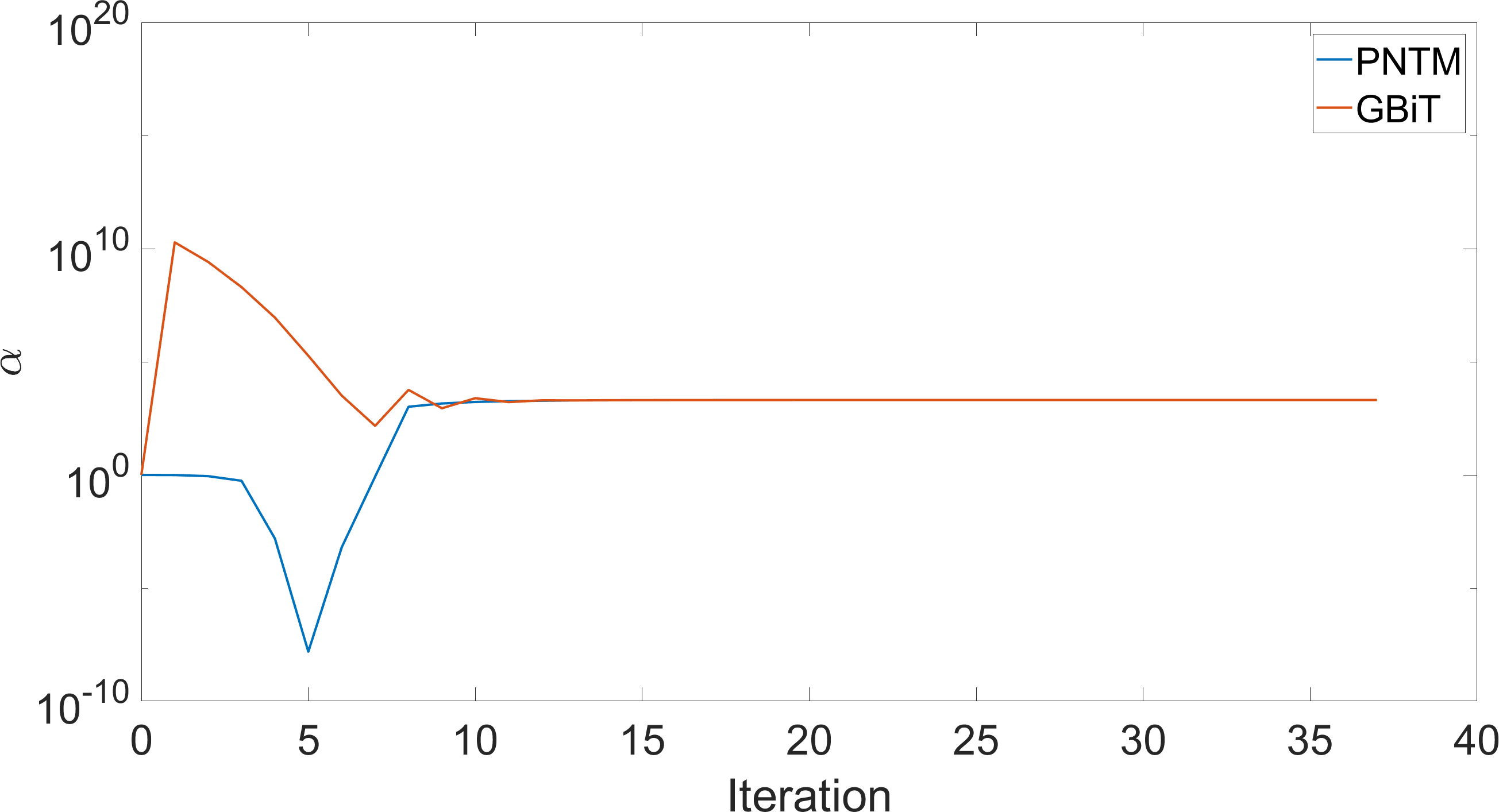}\hspace{2.5pt}
		\includegraphics[width = 0.49\linewidth]{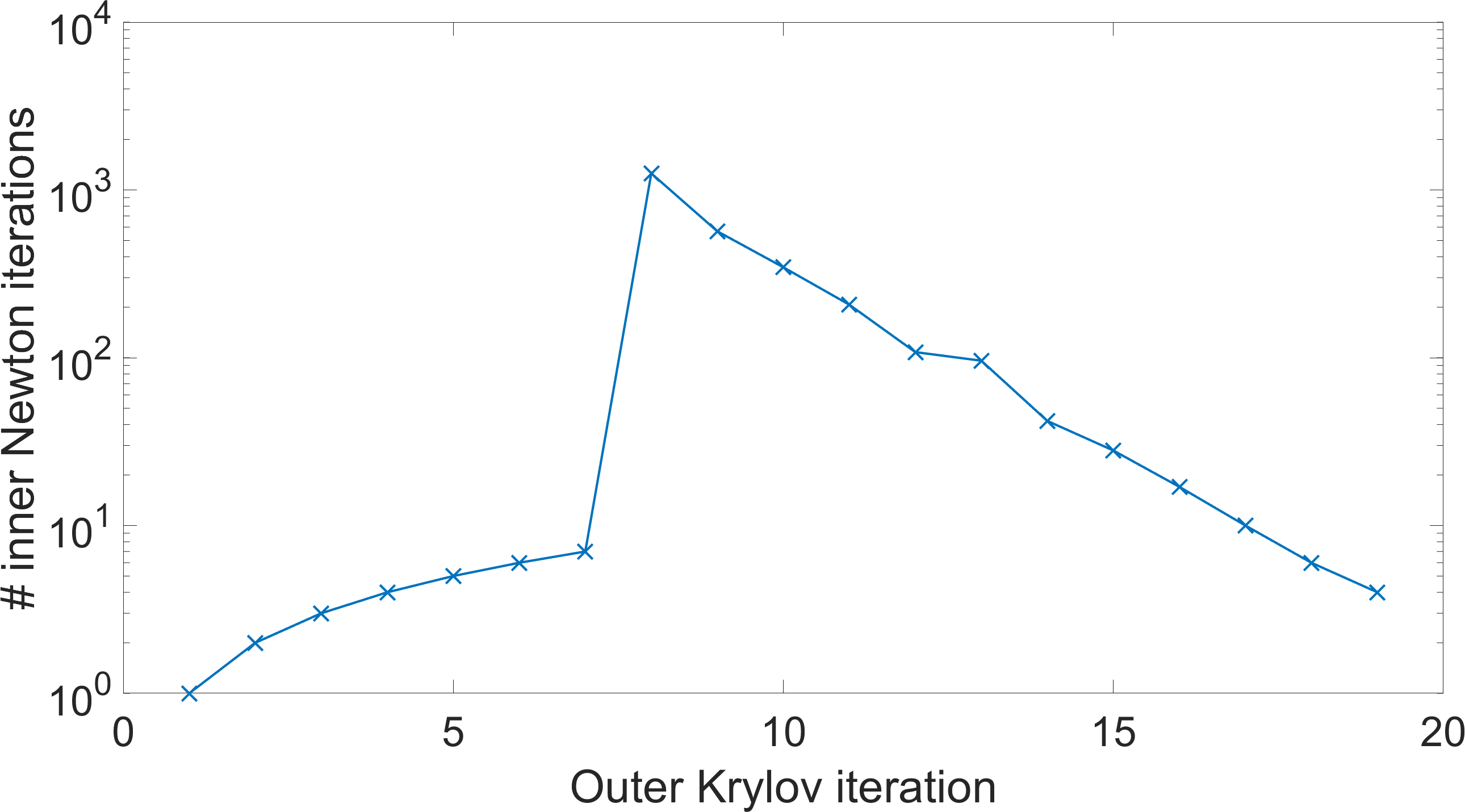}\\[2.5pt]
		\includegraphics[width = 0.49\linewidth]{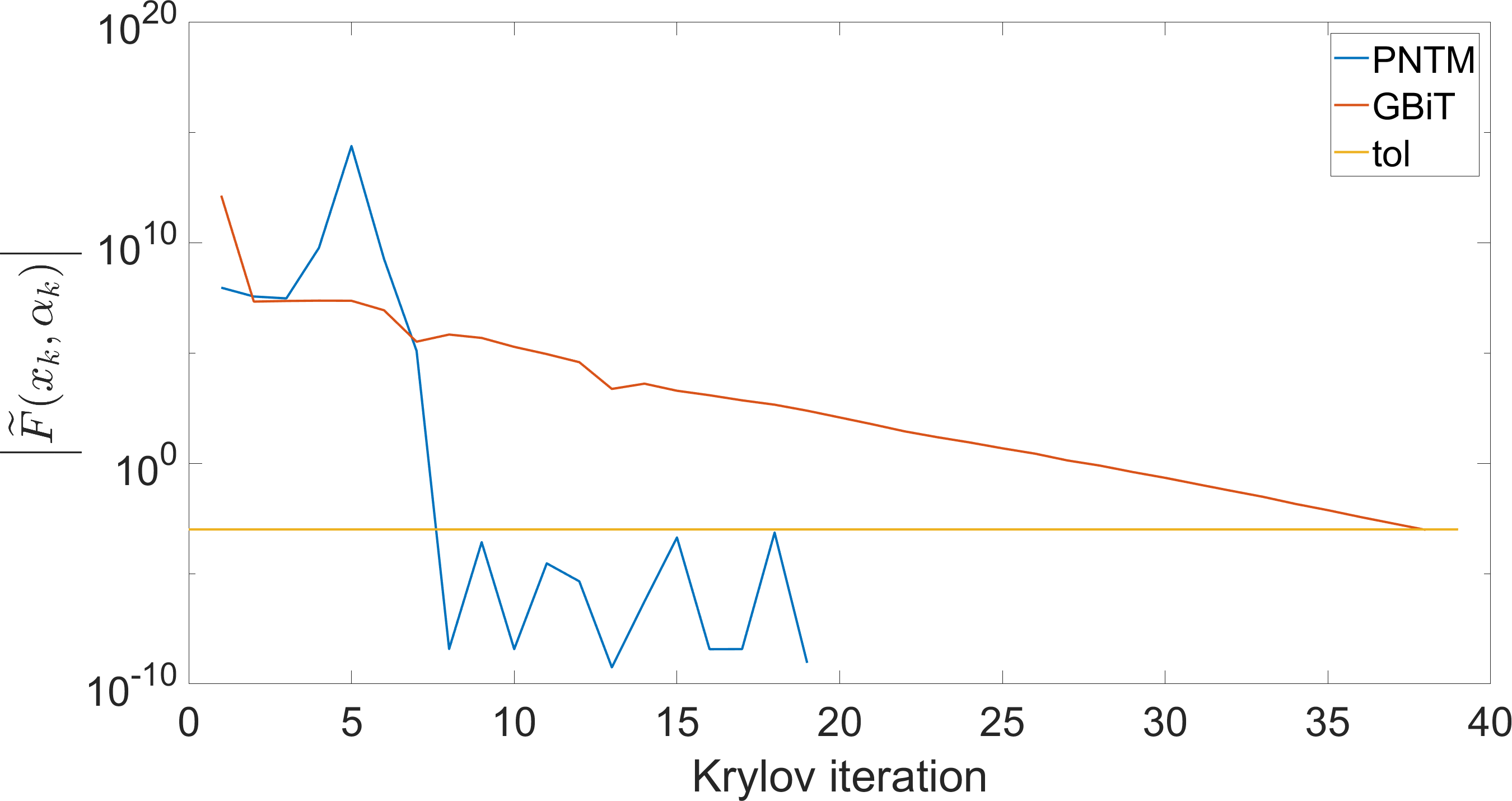}\hspace{2.5pt}
		\includegraphics[width = 0.49\linewidth]{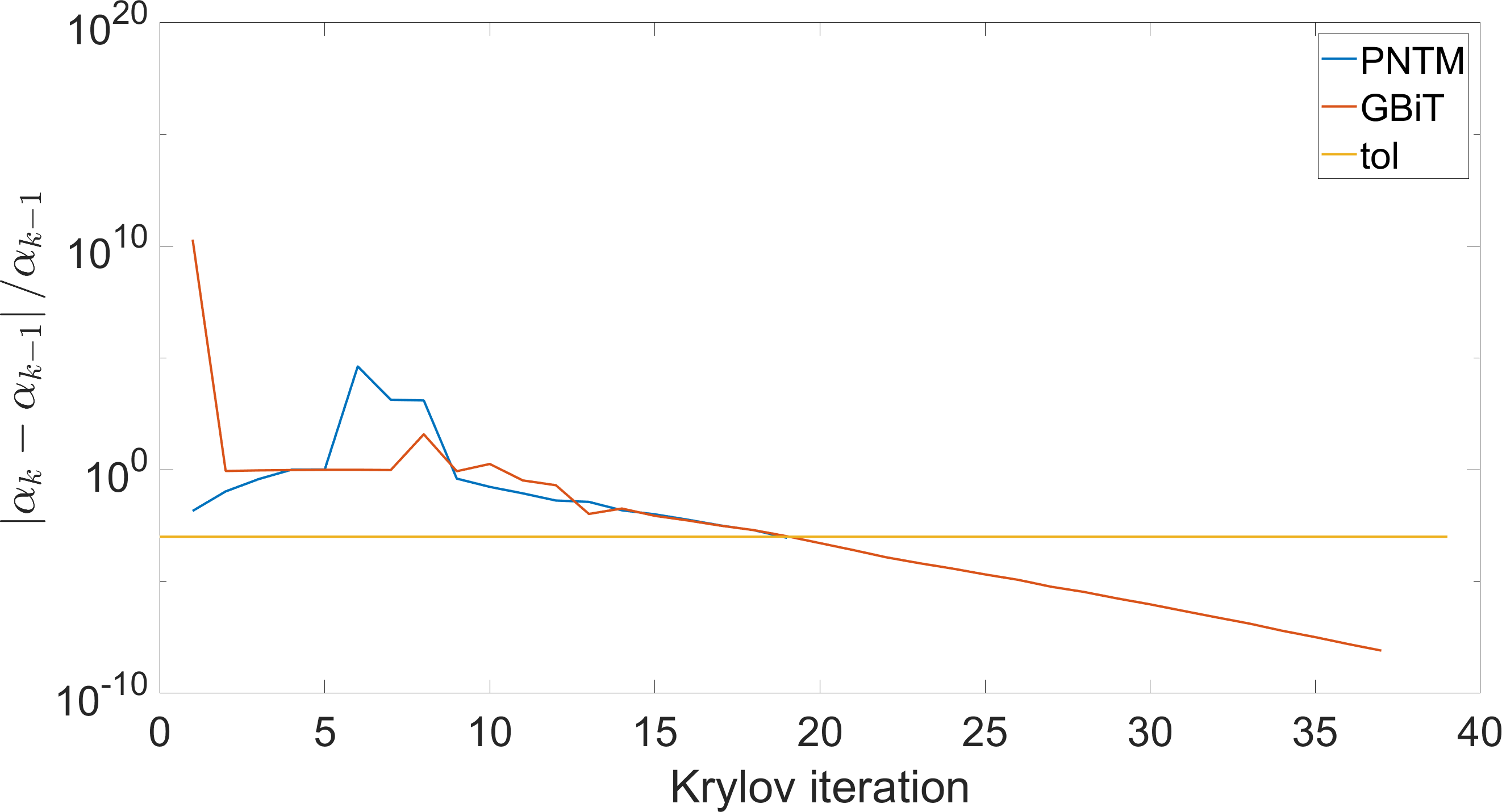}
		\caption{Top: relative error in each iteration. Middle left: value of the
				regularization parameter in each iteration. Middele right: number
				of inner Newton iterations in each outer Krylov iterations for PNTM.
				Bottom: the two parts of the stopping criterion for PNTM and GBiT.}
		\label{fig:ct2}
\end{figure}

\begin{table}
		\centering
		\begin{tabular}{c||c|c|c|c|c}
				     & \# Iterations & Relative error & Residual         & SSIM     & $\alpha$         \\ \hline\hline
				PNTM & $19$ ($2714$) & $0.3159$       & $4.3513\snot{3}$ & $0.2507$ & $2.0399\snot{3}$ \\
				GBiT & $38$          & $0.3164$       & $4.3513\snot{3}$ & $0.2499$ & $2.0413\snot{3}$ \\
				SIRT & $78$          & $0.2832$       & $4.3443\snot{3}$ & $0.4117$ & $\cdot$
		\end{tabular}
		\caption{Details from the CT reconstructions. The Krylov method PNTM and
				GBiT require less iterations than SIRT, but again PNTM needs less
				iterations than GBiT. While the relative error is very similar, the SIRT
				reconstruction has a much larger SSIM. The total number of inner Newton
				iterations for PNTM is mentioned in parentheses.}
		\label{tab:ct}
\end{table}

%%%%%%%%%%%%%%%%%%%%%%%%%%%%%%%%%%%%%%%%%%%%%%%%%%%%%%%%%%%%%%%%%%%%%%%%%%%%%%%%

\subsection{Suite sparse matrix collection}
As a final experiment we take the 26 matrices $A\in\mbbR^{m\times n}$ from the
``SuiteSparse Matrix Collection'' corresponding to a least squares problem \cite{davis2011}.
For each matrix we generate a solution vector $x_{ex}\in\mbbR^n$ with entries
$x_{ex, i} = \sin(ih)$ for $h = 2\pi/(n + 1)$, calculate the right hand side
$b_{ex} = Ax_{ex}\in\mbbR^m$ and add $10\%$ noise. We then solve the resulting
inverse problem with PNTM, GBiT and priorconditionned CGLS (CGLS-PC). Again, we
use $tol = 1\snot{-3}$ for PNTM and GBiT and only consider the step size
from \hypref{corollary}{cor:gamma2}. The CGLS iterations are stopped once the residual
is smaller than $\varepsilon$. We also limit the maximum number of (outer) Krylov iterations
to $100$ and the number of inner Newton iterations for PNTM to $1000$ (\hypref{algorithm}
{alg:pntm} \hypref{line}{alg:pntm:inneriter}). Furthermore, because the $x_{ex}$ is a sine
wave, the Tikhonov problem in its standard form will result in poor reconstructions.
We therefore consider the regularization matrix 
\begin{equation}\label{eq:regmatrix}
		L = \begin{pmatrix} -1 & 1 \\ & -1 & 1 \\ && \ddots & \ddots \\
				&&& -1 & 1 \\ &&&& -1 \end{pmatrix}\in\mbbR^{n\times n},
\end{equation}
which can be seen as placing a smoothness condition on the derivative. We then
solve the problem using the transformation \eqref{eq:stdtransf}. Finally, we always
start the iterations from $\alpha_0 = 1$ and $x_0 = 0$ for CGLS.

The results are listed in \hypref{table}{tab:ssm}, where the relative discrepancy,
the relative error and the relative residue are given by
\[
		\frac{\varepsilon}{\left\|b\right\|},\qquad\text{   }\qquad
				\frac{\left\|x - x_{ex}\right\|}{\left\|x_{ex}\right\|}\qquad\text{and}\qquad
				\frac{\left\|Ax - b\right\|}{\left\|b\right\|}
\]
respectively with $x$ the reconstruction found by the algorithm. Here, we see that while
all methods find a reconstruction with a similar relative error, there are a number of
important differences. First of all note that it is logical that the priorconditionned
CGLS approach requires the least Krylov iterations. This is because the iterations are
stopped when the residual is smaller than $\varepsilon$. It is, however, only at this point
that the other two methods start to produce good values for the regularization parameter.
Then again, due to the presence of the regularization parameter, PNTM and GBiT can be
seen as more flexible. Also note that the regularization parameter $\alpha$ is chosen
by PNTM and GBiT such that the residual matches the discrepancy $\varepsilon$. In the
results we can see, however, that the PNTM method has only converged in a few cases.
It turns out that the $1000$ inner Newton iterations are insufficient for the method
to converge in the constructed Krylov subspace. This is why the total number of
Newton iterations is close to $10000$ and the relative residual does not equal the
relative discrepancy. Increasing the maximum number of inner Newton iterations could
in theory solve this issue. However, this also means that computational cost of the
method increases.

\begin{sidewaystable}
		\scalebox{0.7}{
				\begin{tabular}{l||rrrr|c|rrrrr|rrrr||rrr}
						&&&&&& \multicolumn{5}{c|}{PNTM} & \multicolumn{4}{c|}{GBiT} & \multicolumn{3}{c}{CGLS-PC} \\
						& $m$ & $n$ & \#nnz & cond. & rel. discrp. & rel. err. & rel. res. & $\alpha$ & \#K & \#N & rel. err.
								& rel. res. & $\alpha$ & \#K. & rel. err. & rel. res. & \#K \\
						\hline\hline
						abb313 & $313$ & $176$ & $1,557$ & $1.8\snot{+18}$ & $0.1001$ & $0.2369$ & $0.0989$ & $5.15\snot{+1}$ & $100$ &
								$92036$ & $0.2652$ & $0.1001$ & $6.96\snot{+1}$ & $23$ & $0.1499$ & $0.0992$ & $10$ \\
						ash85  & $85$ & $85$ & $523$ & $4.6\snot{+2}$ & $0.1005$ & $0.0665$ & $0.0887$ & $5.66\snot{+1}$ & $100$ &
								$97006$ & $0.0843$ & $0.1005$ & $1.64\snot{+2}$ & $15$ & $0.0658$ & $0.0957$ & $4$ \\
						ash219 & $219$ & $85$ & $438$ & $3.0$ & $0.1002$ & $0.0521$ & $0.0943$ & $4.33\snot{+1}$ & $100$ &
								$97006$ & $0.0648$ & $0.1002$ & $6.23\snot{+1}$ & $12$ & $0.0523$ & $0.0973$ & $4$ \\
						ash292 & $292$ & $292$ & $2,208$ & $1.2\snot{+18}$ & $0.0996$ & $0.0685$ & $0.0861$ & $8.96\snot{+1}$ & $100$ &
								$96010$ & $0.0518$ & $0.0996$ & $3.28\snot{+3}$ & $16$ & $0.0472$ & $0.0959$ & $5$ \\
						ash331 & $331$ & $104$ & $662$ & $3.1$ & $0.0989$ & $0.0340$ & $0.0989$ & $9.98\snot{-1}$ & $7$ &
								$22$ & $0.0284$ & $0.0989$ & $3.15\snot{+1}$ & $28$ & $0.0426$ & $0.0971$ & $8$ \\
						ash608 & $608$ & $188$ & $1,216$ & $3.4$ & $0.0994$ & $0.0279$ & $0.0931$ & $4.27\snot{+1}$ & $100$ &
								$96010$ & $0.0428$ & $0.0994$ & $2.25\snot{+2}$ & $13$ & $0.0340$ & $0.0993$ & $5$ \\
						ash958 & $958$ & $292$ & $1,916$ & $3.2$ & $0.0994$ & $0.0215$ & $0.0942$ & $5.01\snot{+1}$ & $100$ &
								$95015$ & $0.0280$ & $0.0994$ & $3.92\snot{+2}$ & $18$ & $0.0230$ & $0.0988$ & $6$ \\
						Delor64K & $64,719$ & $1,785,345$ & $652,140$ & $\cdot$ & $0.0996$ & $0.3342$ & $0.1008$ & $1.00\snot{+0}$ & $16$ &
								$106$ & $0.3396$ & $0.0996$ & $6.70\snot{+3}$ & $52$ & $0.3312$ & $0.0995$ & $20$ \\
						Delor295K & $295,734$ & $1,823,928$ & $2,401,323$ & $\cdot$ & $0.0996$ & $0.0209$ & $0.0997$ & $1.00\snot{+0}$ & $16$ &
								$106$ & $0.0246$ & $0.0996$ & $3.38\snot{+4}$ & $66$ & $0.0162$ & $0.0996$ & $18$ \\
						Delor338K & $343,236$ & $887,058$ & $4,211,599$ & $\cdot$ & $0.0995$ & $0.0111$ & $0.0978$ & $1.08\snot{+0}$ & $100$ &
								$92036$ & $0.0043$ & $0.0995$ & $5.65\snot{+6}$ & $27$ & $0.0031$ & $0.0995$ & $10$ \\
						ESOC & $327,062$ & $37,830$ & $6,019,939$ & $\infty$ & $0.0995$ & $0.0586$ & $0.0985$ & $2.01\snot{-8}$ & $100$ &
								$74215$ & $0.0591$ & $0.0995$ & $1.34\snot{+14}$ & $100$ & $0.0225$ & $0.0995$ & $53$ \\
						illc1033 & $1,033$ & $320$ & $4,719$ & $1.9\snot{+4}$ & $0.0993$ & $0.0404$ & $0.0976$ & $1.50\snot{+1}$ & $65$ &
								$58486$ & $0.0508$ & $0.0993$ & $2.31\snot{+1}$ & $14$ & $0.0406$ & $0.0991$ & $6$ \\
						illc1850 & $1,850$ & $712$ & $8,636$ & $1.4\snot{+3}$ & $0.0993$ & $0.0135$ & $0.0972$ & $1.85\snot{+1}$ & $100$ &
								$93028$ & $0.0232$ & $0.0993$ & $4.65\snot{+1}$ & $19$ & $0.0172$ & $0.0989$ & $9$ \\
						landmark & $71,952$ & $2,704$ & $1,146,848$ & $\infty$ & $0.0995$ & $0.0115$ & $0.0993$ & $4.74\snot{+1}$ & $100$ &
								$77185$ & $0.0120$ & $0.0995$ & $4.44\snot{+2}$ & $55$ & $0.0134$ & $0.0995$ & $37$ \\
						Maragal\textunderscore 1 & $32$& $14$ & $234$ & $4.6\snot{+16}$ & $0.0989$ & $0.2048$ & $0.0989$ & $2.80\snot{+0}$ & $8$ &
								$60$ & $0.2048$ & $0.0989$ & $2.80\snot{+0}$ & $9$ & $0.1784$ & $0.0936$ & $4$ \\
						Maragal\textunderscore 2 & $555$ & $350$ & $4,357$ & $2.9\snot{+47}$ & $0.0982$ & $0.0234$ & $0.0942$ & $2.79\snot{+1}$ & $100$ &
								$94021$ & $0.0213$ & $0.0982$ & $9.78\snot{+1}$ & $16$ & $0.0191$ & $0.0977$ & $7$ \\
						Maragal\textunderscore 3 & $1,690$ & $860$ & $18,391$ & $1.5\snot{+47}$ & $0.0993$ & $0.0194$ & $0.0945$ & $3.72\snot{+1}$ & $100$ &
								$94021$ & $0.0225$ & $0.0993$ & $4.62\snot{+2}$ & $20$ & $0.0136$ & $0.0991$ & $7$ \\
						Maragal\textunderscore 4 & $1,964$ & $1,034$ & $26,719$ & $6.1\snot{+33}$ & $0.0996$ & $0.0218$ & $0.0936$ & $4.47\snot{+1}$ & $100$ &
								$96010$ & $0.0321$ & $0.0996$ & $9.87\snot{+2}$ & $17$ & $0.0123$ & $0.0996$ & $5$ \\
						Maragal\textunderscore 5 & $4,654$ & $3,320$ & $93,091$ & $7.4\snot{+31}$ & $0.0994$ & $0.0192$ & $0.0926$ & $5.62\snot{+1}$ & $100$ &
								$95015$ & $0.0328$ & $0.0994$ & $8.95\snot{+3}$ & $17$ & $0.0147$ & $0.0985$ & $6$ \\
						Maragal\textunderscore 6 & $21,255$ & $10,152$ & $537,694$ & $3.3\snot{+33}$ & $0.0995$ & $0.0153$ & $0.0953$ & $7.69\snot{+1}$ & $100$ &
								$95015$ & $0.0174$ & $0.0995$ & $6.09\snot{+4}$ & $20$ & $0.0072$ & $0.0994$ & $6$ \\
						Maragal\textunderscore 7 & $46,845$ & $26,564$ & $1,200,537$ & $\infty$ & $0.0996$ & $0.0160$ & $0.0960$ & $8.26\snot{+1}$ & $100$ &
								$74215$ & $0.0066$ & $0.0996$ & $2.70\snot{+3}$ & $77$ & $0.0119$ & $0.0995$ & $41$ \\
						Maragal\textunderscore 8 & $33,212$ & $75,077$ & $1,308,415$ & $\infty$ & $0.0994$ & $0.0211$ & $0.0930$ & $5.05\snot{+1}$ & $100$ &
								$85105$ & $0.0036$ & $0.0994$ & $3.62\snot{+4}$ & $100$ & $0.0064$ & $0.0994$ & $20$ \\
						Rucci1 & $1,977,885$ & $109,900$ & $7,791,168$ & $\cdot$ & $0.0995$ & $0.0203$ & $0.1006$ & $1.00\snot{+0}$ & $6$ &
								$16$ & $0.0248$ & $0.0995$ & $1.38\snot{+2}$ & $28$ & $0.0019$ & $0.0994$ & $9$ \\
						sls & $1,748,122$ & $62,729$ & $6,804,304$ & $\cdot$ & $0.0995$ & $0.0068$ & $0.0992$ & $1.64\snot{+0}$ & $100$ &
								$89065$ & $0.0028$ & $0.0995$ & $2.59\snot{+6}$ & $33$ & $0.0023$ & $0.0995$ & $14$ \\
						well1033  & $1,033$ & $320$ & $4,732$ & $1.7\snot{+2}$ & $0.0999$ & $0.0188$ & $0.0988$ & $1.26\snot{+1}$ & $36$ &
								$27098$ & $0.0229$ & $0.0999$ & $1.83\snot{+1}$ & $19$ & $0.0154$ & $0.0998$ & $8$ \\
						well1850  & $1,850$ & $712$ & $8,755$ & $1.1\snot{+2}$ & $0.0998$ & $0.0170$ & $0.0955$ & $2.37\snot{+1}$ & $100$ &
								$95015$ & $0.0565$ & $0.0998$ & $1.80\snot{+2}$ & $14$ & $0.0339$ & $0.0982$ & $6$
				\end{tabular}
		}
		\caption{Details of the 26 matrices and the PNTM, GBiT and CGLS-PC reconstructions.
				\#K indicates the number of Krylov iterations and \#N the total number of
				inner Newton iterations for PNTM. Because we limited the number this number,
				the PNTM has trouble satisfying the stopping criterion, despite the fact that
				reconstruction has similar quality as the other methods.}
		\label{tab:ssm}
\end{sidewaystable}

%%%%%%%%%%%%%%%%%%%%%%%%%%%%%%%%%%%%%%%%%%%%%%%%%%%%%%%%%%%%%%%%%%%%%%%%%%%%%%%%

\section{Conclusions \& remarks}\label{sec:concl}
In this paper we introduced two different numerical methods: Newton on the Tikhonov-
Morozov system (NTM) and projected Newton on the Tikhonov-Morozov system (PNTM). We
derived the NTM method based on theoretical results and illustrated two difficulties:
the estimated step size and the computational cost. In order to reduce the computational
cost we projected the problem onto a low dimensional Krylov subspace.
The small estimate for the step size, however, remains an issue.

In the numerical experiments it is important to note the difference between GBiT
(and by extension GAT) and PNTM. While both methods solve the inverse problem in
increasingly larger Krylov subspaces, the value that is minimized in each Krylov
subspace and the way the regularization parameter is updated are different. GBiT
solves the projected Tikhonov normal equations in each Krylov subspace using a fixed
regularization parameter and only afterwards updates the regularization parameter
for the next Krylov iteration. This can be seen as alternating between minimizing
$\wt{F}_1$ using a Krylov method and minimizing $\wt{F}_2$ using the secant method.
The PNTM method minimizes both values simultaneously in the Krylov subspace using
Newton's method and only expands the Krylov subspace if the value for the regularization
parameter has not stagnated yet. Our numerical experiments seem to indicate that
the alternating approach of GBiT is less efficient than the simultaneous update
approach of PNTM. This however assumes that the number of inner Newton iterations
for PNTM is high enough for them to converge. As a result of the small estimate
for the step size we currently use, this may take too many iterations to be a
viable alternative. Improving the choice of the step size -- possibly using a
backtracking approach -- is therefore necessary in order to improve this method.

%%%%%%%%%%%%%%%%%%%%%%%%%%%%%%%%%%%%%%%%%%%%%%%%%%%%%%%%%%%%%%%%%%%%%%%%%%%%%%%%

\section*{Acknowledgments}
The authors wish to thank the Department of Mathematics and Computer Science,
University of Antwerp, for financial support.

%%%%%%%%%%%%%%%%%%%%%%%%%%%%%%%%%%%%%%%%%%%%%%%%%%%%%%%%%%%%%%%%%%%%%%%%%%%%%%%%

\bibliographystyle{siamplain}
\bibliography{references}

%%%%%%%%%%%%%%%%%%%%%%%%%%%%%%%%%%%%%%%%%%%%%%%%%%%%%%%%%%%%%%%%%%%%%%%%%%%%%%%%

\end{document}